\newtheorem{proposition}{Proposition}
\newtheorem{theorem}{Theorem}
\newtheorem{claim}{Claim}
\newtheorem{lemma}{Lemma}
\newtheorem{conjecture}{Conjecture}
\newtheorem{condition*}{Definition}
\newcommand{\Tr}{\operatorname{Tr}}
\newcommand{\lsim}{\lesssim}
\newcommand{\gsim}{\gtrsim}
\definecolor{MidnightBlue}{RGB}{25,25,150}
\definecolor{BrickRed}{RGB}{182,50,28}
\definecolor{ForestGreen}{RGB}{34,139,34}
\title{Localization of One-Dimensional Random Band Matrices}
\author{Reuben Drogin}
\begin{document}

\begin{abstract}
We consider a general class of $n\times n$ random band matrices
with bandwidth $W.$ When $W^2\ll n$, 
we prove that with high probability
the eigenvectors of such matrices are localized and 
decay exponentially at the sharp scale $W^2$.   
Combined with the delocalization results of
Yau and Yin \cite{Ya-Yin}, and Erd\H{o}s and Riabov \cite{erdos-riabov}, 
this establishes the conjectured 
localization-delocalization transition for a large class of 
random band matrices. 
\end{abstract}
\maketitle
\section{Introduction}
\subsection{Model and Results}
Consider a symmetric random matrix $H\in (\mathbb{R}^{W\times W})^{N\times N}$ 
in the following block tri-diagonal form: 
\begin{equation}\label{eq:definition-of-H}
    H :=  
          \begin{pmatrix}
            A_{1} & B_{1} & \dots & 0\\
            B_{1}^* & A_{2} & \ddots & \vdots \\
            \vdots & \ddots & \ddots &B_{N-1}\\
            0 & \dots & B_{N-1}^{*}& A_{N} 
            \end{pmatrix},
\end{equation} 
where the $A_i$ and $B_i$ are $W\times W$ matrices and $A_i = A_i^{\ast}$. 
We assume the entries of 
$H$ are independent up to the symmetry constraint $H = H^{\ast}$, 
and the entries of $\left(\sqrt{W}A_i\right)_{i\in \left[1,N\right]}$ 
and $\left(\sqrt{W}B_i\right)_{i\in \left[1,N-1\right]}$ are \textit{$M$-regular}, for some $M>0$. 

\begin{condition*}[M-Regular]\label{assumption:general-distribution}
For any $M>0$, an $\mathbb{R}$-valued random variable $X$ is $M$-regular if 
$\mathbb{E}X = 0$, $\mathbb{E}X^2 \leq 1$, 
$\mathbb{E}X^4 \leq M,$ and it has a $C^2$ 
density $\phi:\mathbb{R}\to \mathbb{R}_{>0}$, satisfying
\begin{equation}\label{eq:general-distribution-condition}
\left\| \frac{d^2}{dx^2} \left(\log\phi\right)\left(x\right)\right\|_{L^\infty(\mathbb{R})} \leq M.
\end{equation}
\end{condition*}

Examples of $M$-regular distributions 
include the unit Gaussian but also more general random variables 
such as those with a density proportional
to $\frac{1}{1+|x|^{\alpha}}$ for some $\alpha>5$. 
In particular, $H$ may be drawn from the real Wegner orbital model, 
in which the $A_i$ and $B_i$ are properly normalized independent GOE and Ginibre matrices, respectively.

If we let $n=NW$, then $H$ is an $n\times n$ \textit{random band matrix} (RBM) 
with bandwidth of order $W$. It is conjectured the eigenvectors of such matrices 
are typically \textit{exponentially localized} when 
$W^2/n\ll 1$ and \textit{delocalized} when $W^2/n\gg 1$. See the 
discussion in Section 1.2. Our main result rigorously proves 
the localization part of 
this conjecture for the class of RBMs above.




\begin{theorem}\label{thm:general-main}
Let $M,E_0>0$ and $H$ be as in \eqref{eq:definition-of-H}
with independent entries up to the constraint $H=H^{\ast}$.   
If the nonzero entries of $\sqrt{W}H$ are $M$-regular, 
and $\left(\psi_j, E_j\right)_{j\in \left[1,NW\right]}$ are
the normalized eigenvector-eigenvalue pairs of $H$, then for any $N,W>0$
and $x,y\in [1,NW]$ we have
\begin{equation}\label{eq:conclusion-of-thm-1}
\mathbb{E}\left[\sum_{j:|E_j|\leq E_0}\left|\psi_{j}\left(x\right)\right|\left|\psi_j\left(y\right)\right|\right]\leq 2W^Ce^{-\frac{c|x-y|}{W^2}}.
\end{equation}
The constants $C,c>0$ depend only on $M$ and $E_0$. 
\end{theorem}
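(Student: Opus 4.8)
The plan is to pass from the eigenfunction correlator to a fractional-moment bound on the block resolvent and then propagate that bound along the one-dimensional chain of $W\times W$ blocks. Write $G(z)=(H-z)^{-1}$ with $z=E+\mathrm{i}\eta$, and for blocks $a,b\in[1,N]$ let $G_{ab}(z)\in\mathbb{C}^{W\times W}$ be the corresponding sub-block, so that $|G_{xy}(z)|\le\|G_{ab}(z)\|$ whenever $x,y$ lie in blocks $a,b$. Since $\sum_j|\psi_j(x)|^2\le1$, the inner sum in \eqref{eq:conclusion-of-thm-1} never exceeds $1$, which already gives the claimed bound when $|x-y|\lesssim W^2\log W$. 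For larger separations I would invoke the standard implication (going back to Aizenman--Molchanov and Aizenman--Elgart--Naboko--Schenker--Stolz) that a uniform fractional-moment bound on $G_{xy}$ produces exponential decay of the eigenfunction correlator, the passage costing only a factor polynomial in the bandwidth. In this way the theorem reduces to proving, for some fixed $s\in(0,1)$,
\[
\sup_{|E|\le E_0,\ \eta>0}\ \mathbb{E}\,\|G_{ab}(E+\mathrm{i}\eta)\|^{s}\ \le\ W^{C}\,e^{-c|a-b|/W},
\]
where $|a-b|$ is the \emph{block} distance; since block distance is site distance divided by $W$, this is exactly decay at the scale $W^{2}$ of \eqref{eq:conclusion-of-thm-1}.

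For the resolvent bound I would exploit the block-tridiagonal form of \eqref{eq:definition-of-H} through Schur complements. Deleting blocks $1,\dots,k-1$ and writing $M_k:=\big[(H^{(\ge k)}-z)^{-1}\big]_{kk}$, one has $\operatorname{Im}M_k\succ0$ for $\eta>0$, the Riccati recursion $M_k=(A_k-z-B_kM_{k+1}B_k^{*})^{-1}$, and the factorization
\[
G_{ab}\ =\ (-1)^{b-a}\,G_{aa}\,(B_aM_{a+1})(B_{a+1}M_{a+2})\cdots(B_{b-1}M_b)\qquad(a<b).
\]
Because $\sqrt{W}B_k$ is comparable to a (real) Ginibre matrix, each $B_k$ is almost surely invertible with $\|B_k\|$ of order one; and here the $M$-regularity \eqref{eq:general-distribution-condition} enters: the conditional density of $A_k$ is regular enough to run the usual spectral-averaging argument and conclude $\mathbb{E}\big[\|M_k\|^{s}\mid\text{blocks}>k\big]\le C(M,E_0)$, and likewise $\mathbb{E}\|G_{aa}\|^{s}\le C$. (The same condition forces $\mathbb{E}X^{2}\gtrsim_M 1$, so the randomness does not degenerate and the localization length is genuinely $\asymp_M W^{2}$.) The essential structural point is that $(M_N,M_{N-1},\dots)$ is a Markov chain — each $M_k$ is a function of $M_{k+1}$ and the fresh randomness $(A_k,B_k)$ — so $G_{ab}$ is, up to the harmless prefactor $G_{aa}$, a product of the $W\times W$ matrices $\Pi_k:=B_kM_{k+1}$ modulated by this chain. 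The one-step estimates control only norms, and $\prod_k\|\Pi_k\|$ grows, so a genuine many-block contraction must be extracted.

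That contraction is the exponential decay of the modulated product $\Pi_a\cdots\Pi_{b-1}$, governed by its top Lyapunov exponent. In the bulk $|E|<2$ one has $M_k\approx m_{\mathrm{sc}}(z)I$ with $|m_{\mathrm{sc}}|=1$, so $\Pi_k\approx m_{\mathrm{sc}}B_k$ and the product behaves like a product of independent mean-zero matrices whose entries have variance $\asymp1/W$; for such products all Lyapunov exponents are negative, the largest being of size $1/W$ — the reciprocal of the localization length in block units. To make this quantitative I would study the twisted transfer operator attached to the chain $\{M_k\}$ and the cocycle $\Pi_k$, acting on functions over the Siegel upper half-space (in which the $M_k$ live, the Riccati map acting by M\"obius-type transformations), use \eqref{eq:general-distribution-condition} to control the regularity of its stationary law, and show that for small $s$ its spectral radius is at most $e^{-c/W}$; this delivers $\mathbb{E}\|\Pi_a\cdots\Pi_{b-1}\|^{s}\le W^{C}e^{-c|a-b|/W}$, a fractional-moment incarnation of the contraction together with a large-deviation bound for $\log\|\Pi_n\cdots\Pi_1\|$, the mild overlap in dependence between neighbouring blocks being absorbed by conditioning successively along the chain. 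Combined with the reduction above this completes the proof; the regime $2<|E|\le E_0$ is easier, since there $|m_{\mathrm{sc}}(E)|<1$ already forces an $O(1)$ per-block contraction.

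The step I expect to be the main obstacle is obtaining this contraction at the \emph{sharp} scale, i.e.\ with per-block rate $c/W$ rather than $c/W^{\kappa}$ for some $\kappa>1$. Off-the-shelf fractional-moment or Furstenberg-type arguments establish strict contraction but control its rate only through crude operator-norm estimates, which fall well short because the true rate — the magnitude of the (negative) top Lyapunov exponent — is only of order $1/W$; this is the source of the non-optimal powers of $W$ (e.g.\ localization length $W^{8}$) in earlier work. Reaching $W^{2}$ requires a sufficiently precise, self-consistent description of the fluctuations of the Schur complements $M_k$ about $m_{\mathrm{sc}}I$ to pin the top Lyapunov exponent down to order $1/W$, together with a large-deviation estimate for $\log\|\Pi_n\cdots\Pi_1\|$ whose rate is uniform in $W$, so that the fractional moment of the matrix product is governed by its mean $\asymp -n/W$ rather than by atypical excursions. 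Carrying this out uniformly in $\eta\downarrow0$ and $|E|\le E_0$, including near the spectral edge $|E|=2$, together with the bookkeeping of dependencies along the chain, is the technical heart of the matter; the reduction and one-step bounds above are standard fractional-moment-method input.
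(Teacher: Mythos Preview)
Your reduction from the eigenfunction correlator to a fractional-moment bound on the block resolvent is the same as the paper's, and your Schur-complement factorization of $G_{ab}$ into a product along the chain is essentially the Schenker decomposition the paper uses (with the cosmetic difference that your $M_k$ are built from blocks $\ge k$ while the paper's $D_j$ are built from blocks $\le j$). The one-step Wegner-type inputs you quote are also standard.

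The gap is exactly where you locate it yourself: you do not actually prove the per-block contraction rate $c/W$, and the Lyapunov-exponent/twisted-transfer-operator program you sketch has not been carried out at the sharp scale --- it is in fact listed as an open problem in the paper (Section~1.5, item~(4)). Showing that the top Lyapunov exponent of $\Pi_k=B_kM_{k+1}$ is $-c/W$ together with a $W$-uniform large-deviation estimate is precisely what previous approaches could not do outside special cases (e.g.\ Shapiro at $E=0$), and your proposal supplies no mechanism: ``a sufficiently precise, self-consistent description of the fluctuations of $M_k$ about $m_{\mathrm{sc}}I$'' is a restatement of the goal, not a method to reach it.

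The paper bypasses the Lyapunov exponent entirely. Writing $\log\|G(1,N)w\|=\sum_j\alpha_j$ with $\alpha_j=\log\|D_j^{-1}B_jv_j\|$ and conditioning on $\mathcal{F}=\sigma((D_i,v_i)_i)$ so that the $\alpha_j$ become independent, the new idea is a Mermin--Wagner argument showing that each $\alpha_j$ \emph{fluctuates} at scale $W^{-1/2}$ conditionally on $\mathcal{F}$. The perturbation used is $B_j\mapsto B_j\pm 2W^{-1/2}\ket{B_jv_j}\bra{v_j}$: a rank-one change that shifts $\alpha_j$ by $\pm 2W^{-1/2}$ while distorting the conditional density of $B_j$ by only an $O(1)$ factor, because only one column of $B_j$ is touched (earlier works effectively scaled the whole matrix, which is why they lost powers of $W$). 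An elementary convexity inequality (Lemma~\ref{lem:elementary-jensens}) then converts ``fluctuation at scale $W^{-1/2}$'' into a per-step factor $e^{-c/W}$ in the fractional moment. No information about the size of the Lyapunov exponent, and no fine description of the law of $M_k$, is needed; what is required is only that $\|B_jv_j\|$, $\|f_{j+1}(A_{j+1})v_j\|$, and $\|B_j^{*}D_j^{-1}B_jv_j\|$ are typically $O(1)$, the last of these via a short deterministic dichotomy (Claim~\ref{claim:either-or}) rather than any operator-norm bound on $D_j^{-1}$. This fluctuation mechanism is the missing ingredient in your proposal.
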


Combined with the delocalization results in  \cite{Ya-Yin} and \cite{erdos-riabov}, 
Theorem \ref{thm:general-main} establishes the 
conjectured localization-delocalization transition for 
a large class of one-dimensional RBMs. 
Specifically, let $H$ be as in Theorem \ref{thm:general-main}
and additionally suppose the variances in each row sum to $1$, 
and the entries of $\sqrt{W}H$ have uniformly bounded moments of all orders. 
If $W^2/n\gg 1$, Corollary 3.5 of \cite{erdos-riabov} implies 
that all eigenvectors 
of $H$ with eigenvalues in $[-2+\epsilon, 2-\epsilon]$
are delocalized with high probability.
Conversely, if $W^2/n\ll 1$, Theorem \ref{thm:general-main} implies 
that all such 
eigenvectors of $H$ are exponentially localized with high probability.  

Briefly, we comment on the condition $|E_j|\leq E_0$ 
in \eqref{eq:conclusion-of-thm-1}. 
If $W\geq n^{\epsilon}$ and the entries 
of $\sqrt{W}H$ have sufficiently many uniformly bounded moments, depending on $\epsilon$, 
then it is well known that $\sigma(H)\subset [-E_0,E_0]$ with high probability, for $E_0$ large. 
Alternatively, the condition $|E_j|\leq E_0$
can be removed completely via well-known Lifschitz tailing arguments; 
see Remark 2.2 in \cite{Schenker-Shapiro-etc-2022}.

By standard arguments, 
see Theorem A.1 of \cite{Aizenman-frac-moment} or Chapter 7 of \cite{aizenman-warzel}, 
Theorem \ref{thm:general-main} follows from 
Theorem \ref{thm:single-energy-decay} below. 
To state the  theorem 
define, for any $E\in \mathbb{R}$, the operator $G\left(E\right)$ 
by 
$$G\left(E\right):= \left(H-E\right)^{-1}.$$
It is well known that for any fixed $E\in \mathbb{R}$, 
$G(E)$ exists almost surely under the assumptions on $H$ above 
(see Proposition \ref{prop:wegner}). We often leave the $E$ dependence 
implicit and write $G\left(i,j\right)$ for the $W\times W$ matrix 
which is the $\left(i,j\right)$-th block of $G\left(E\right)$.  

\begin{theorem}\label{thm:single-energy-decay}
Let $M,E_0>0$ and $H$ be as in \eqref{eq:definition-of-H} 
with independent entries up to the constraint $H = H^{\ast}$.
If all non-zero entries of $\sqrt{W}H$ are $M$-regular,
then for any $q\in (0,1/5],\ 
|E|\leq E_0$, $N,W>0$, and unit vector $w\in \mathbb{R}^W$, we have 
\begin{equation}\label{eq:conclusion-of-thm-2}
\left(\mathbb{E}\left\|G\left(1,N\right)w\right\|^{q}\right)^{1/q}\leq CW^{C}e^{-\frac{cN}{W}}.
\end{equation}
The constants $C,c>0$ depend only on $M$ and $E_0$.
\end{theorem}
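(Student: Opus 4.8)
The plan is to prove Theorem \ref{thm:single-energy-decay} by a matrix-valued version of the fractional moment (Aizenman--Molchanov) method, with the exponential rate $e^{-cN/W}$ extracted from a quantitative analysis of the block Riccati recursion. The starting point is the one-sided Schur-complement identity for the block-tridiagonal resolvent. Writing $H^{[k,N]}$ for the restriction of $H$ to blocks $k,\dots,N$ and $\Gamma_k:=\big((H^{[k,N]}-E)^{-1}\big)(k,k)$, one has $\Gamma_N=(A_N-E)^{-1}$ and the Riccati recursion $\Gamma_k=\big(A_k-E-B_k\Gamma_{k+1}B_k^\ast\big)^{-1}$, while the off-diagonal block telescopes:
\[
G(1,N)=\Gamma_1(-B_1)\Gamma_2(-B_2)\cdots\Gamma_{N-1}(-B_{N-1})\Gamma_N .
\]
The crucial structural point is that $\Gamma_k$, and hence the partial product $v_k:=\Gamma_k(-B_k)\cdots\Gamma_N\,w=G^{[k,N]}(k,N)w$, depends only on the blocks of index $\ge k$, and that $v_k=\Gamma_k(-B_k)v_{k+1}$ with $\Gamma_{k+1},v_{k+1}$ measurable with respect to $\mathcal G_{k+1}:=\sigma(A_j,B_j:j\ge k+1)$ while $A_k,B_k$ are independent of $\mathcal G_{k+1}$.

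I would then run the fractional moment iteration. Using $q$-subadditivity and the tower property over the filtration $\mathcal G_2\subset\mathcal G_3\subset\cdots$, the quantity $\mathbb E\|G(1,N)w\|^q=\mathbb E\|v_1\|^q$ is an iterated application of a single ``transfer step''
\[
\mathbb E\big[\|v_k\|^q\,\big|\,\mathcal G_{k+1}\big]=\mathbb E_{A_k,B_k}\Big[\big\|\big(A_k-E-B_k\Gamma_{k+1}B_k^\ast\big)^{-1}B_k\,v_{k+1}\big\|^q\Big],
\]
which exhibits $\mathbb E\|G(1,N)w\|^q$ as a transfer operator acting on functionals of the pair $(\Gamma_k,v_k/\|v_k\|)$. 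Two inputs are needed to control it. The first is a Wegner / spectral-averaging estimate in the spirit of Proposition \ref{prop:wegner}: since $\sqrt W A_k$ and $\sqrt W B_k$ are $M$-regular, hence have $C^2$ densities with $\|(\log\phi)''\|_\infty\le M$, one obtains for \emph{fixed} symmetric $Z$ and fixed $u$ a bound $\mathbb E_{A_k}\|(A_k-Z)^{-1}u\|^q\lsim\|u\|^q$ and, crudely, $\mathbb E\|\Gamma_k\|^q\lsim\mathrm{poly}(W)$. This guarantees every fractional moment is \emph{finite} and neutralizes the rare ``resonant'' events where $E$ lies near the spectrum of a sub-block; it is also where the restriction $q\le 1/5$ enters, being essentially the largest exponent for which the relevant matrix resolvent moments are controllable under the fourth-moment hypothesis while keeping $q$-subadditivity.

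The second input is the crux, and the main obstacle: a \emph{tight} per-block contraction, namely that a single transfer step typically multiplies the fractional moment by a factor $1-\Theta(1/W)$ rather than by a constant. Equivalently, one must show that the matrix Riccati chain $\Gamma_{k+1}\mapsto\Gamma_k$ has a well-defined stationary regime in which $\mathbb E\big[\log(\|v_k\|/\|v_{k+1}\|)\big]$, averaged along the chain, is bounded above by $-c/W$ --- morally, a lower bound of order $1/W$ on the smallest positive Lyapunov exponent of the Wegner-orbital transfer matrices, made effective and stable enough to survive the fractional-moment bookkeeping. Here crude operator-norm bounds are hopeless, since they would only give $\prod_k\|\Gamma_k\|=W^{\Theta(N)}$; the per-step estimate must be accurate to $1+o(1/W)$, with the $\mathrm{poly}(W)$ losses incurred only a bounded number of times. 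I would attack this by combining (i) a priori spectral control of the sub-blocks $H^{[k,N]}$ (semicircle-type bounds, $\|A_k\|\lsim 1$) to confine $\Gamma_k$ to a ``good'' set off an event of small probability; (ii) the density and log-concavity bounds of the $M$-regular laws to show that, on the good set, averaging over one fresh block $(A_k,B_k)$ moves the relevant Riccati functional by $\Omega(1/W)$ in the contracting direction; and (iii) the Wegner estimate of the previous paragraph to absorb the complementary bad event. This step is where the gain from $W^2\ll n$ --- i.e.\ from the localization length being exactly $W^2$ --- is produced, and it is the heart of the argument.

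Finally I would assemble the pieces. Iterating the single-step contraction across the $N-1$ junctions and absorbing the finitely many $\mathrm{poly}(W)$ losses gives $\mathbb E\|v_1\|^q\lsim W^{C}\,e^{-cN/W}\,\mathbb E\|v_N\|^q$, while $\mathbb E\|v_N\|^q=\mathbb E\|(A_N-E)^{-1}w\|^q\lsim\mathrm{poly}(W)$ by the Wegner estimate; taking $q$-th roots yields \eqref{eq:conclusion-of-thm-2}. Uniformity over unit vectors $w$ is automatic because each transfer-step bound is in terms of $\|v_{k+1}\|^q$ only, and uniformity over $|E|\le E_0$ (together with the claimed dependence of $C,c$ on $M,E_0$ alone) follows since every estimate above is uniform on that energy window. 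The passage to Theorem \ref{thm:general-main} via \eqref{eq:conclusion-of-thm-1} is then the standard fractional-moment-to-eigenfunction-correlator argument already cited in the excerpt.
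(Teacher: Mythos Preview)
Your setup is correct and coincides with the paper's: the block Riccati/Schenker product, the filtration, the Wegner input, and the general shape of a fractional-moment iteration. The gap is precisely at what you yourself flag as ``the crux'': you assert that a single transfer step contracts the fractional moment by $1-\Theta(1/W)$, but the mechanism you offer in (ii) --- ``density and log-concavity bounds \dots\ move the relevant Riccati functional by $\Omega(1/W)$ in the contracting direction'' --- is not a proof, and historically this is exactly where all prior arguments stalled at $W^{-4}$ or worse. A uniform bound $\mathbb E\big[\|v_k\|^q\,\big|\,\mathcal G_{k+1}\big]\le(1-c/W)\|v_{k+1}\|^q$ is in fact too much to ask, since for bad realizations of $\Gamma_{k+1}$ the conditional expectation can be large; and the Lyapunov-exponent reformulation you mention is known to be hard to make quantitative (cf.\ the paper's remarks in \S1.5).

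The paper's mechanism is different in kind. It does \emph{not} prove per-step contraction of the mean; instead it proves a per-step \emph{anti-concentration} bound: conditional on $\mathcal F=\sigma((D_i,v_i)_i)$, the increment $\alpha_j=\log\|D_j^{-1}B_jv_j\|$ fluctuates at scale $W^{-1/2}$ (Lemma~\ref{lem:single-step-variance-general}). This is obtained by a Mermin--Wagner argument (Lemma~\ref{lem:MW}) applied to the specific rank-one perturbation $B_j\mapsto B_j\pm 2W^{-1/2}\ket{B_jv_j}\bra{v_j}$, which shifts $\alpha_j$ by $\pm 2W^{-1/2}$ while distorting the conditional density of $B_j$ by only an $O(1)$ factor on a good event (Lemma~\ref{lem:distortion-general}). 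Fluctuation at scale $\delta$ is then converted to decay via the convexity inequality $(\mathbb E e^{X})^2\le e^{-cp\delta^2}\mathbb E e^{2X}$ (Lemma~\ref{lem:elementary-jensens}), which, together with conditional independence of the $\alpha_j$ and Cauchy--Schwarz, yields $(\mathbb E\|G(1,N)w\|^q)^2\le W^{Cq}\,\mathbb E e^{-cq^2W^{-1}\sum_j p_j}$; the Wegner estimate closes the doubled exponent, and the restriction $q\le 1/5$ enters here (so that $4q<1$). A final, genuinely delicate point you do not anticipate is that the good event requires $\|B_j^\ast D_j^{-1}B_jv_j\|\lesssim 1$ even though $\|D_j^{-1}\|$ is typically of order $W$; this is handled by the ``either--or'' Claim~\ref{claim:either-or}, which exploits the Riccati relation to show that if this quantity is large at step $j$ then it is small at step $j-1$. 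None of these ingredients --- the column-only perturbation, the fluctuation-to-convexity conversion, or the either--or trick --- appear in your outline, and they are what actually produces the sharp $c/W$ rate.
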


Note that, due to the block structure, the entries in $G\left(1,N\right)$
correspond to pairs of indices separated by the order $NW$.

\subsection{Background and Motivation}
RBMs are matrices with random entries 
that vanish or decay outside of 
a band around the diagonal. They are of interest 
because they model 
various phenomena arising in quantum chaos and the Anderson model. 
Some of the earliest 
numerical results on RBMs arose in 
connection to the quantum kicked rotator, 
which is the quantum analogue of the classical Chirikov standard map. 
The associated dynamics on $\ell^2(\mathbb{R}/2\pi\mathbb{Z})$ are generated by the time-periodic Hamiltonian 
$$H(\theta,t) = \frac{\partial^2}{\partial \theta^2}- k\cos(\theta)\delta_\tau(t)$$
for some $k,\tau>0$, where $\delta_\tau$ is the $\tau$-periodic delta function. 
In Fourier space, the time-$\tau$
mapping corresponds to an infinite unitary matrix with `pseudo-random' entries 
decaying exponentially outside a band of width $k$. 
The papers \cite{kicked-rotator,Chirikov,Kicked-rotor-3} found 
that for typical choices of $\tau$, the 
momentum distribution of a wavefunction spreads diffusively
for time on the order $k^2$, beyond which it stalls, and 
argued similar behavior when the entries of the matrix are 
fully random. 
This suggested the eigenvectors of these banded matrices, 
are localized on the scale $k^2$. For a broader discussion of quantum chaos and 
RBMs, see the survey \cite{Casati-Chirikov-1995}.

In the early 1990s, motivated by the connections mentioned above, 
many authors \cite{Casati-et-al-1990,Casati-et-al-1990-2,Seligman-et-al},
numerically studied the spectral behavior of $n\times n$ RBMs, 
as the bandwidth, given by $W$, varied. They found the ratio $W^2/n$ governs the spectral 
behavior and conjectured the following: 
\begin{conjecture}
\label{conj:delocalization-localization}
Let $H\in \mathbb{R}^{n\times n}$ be a random band matrix with 
bandwidth $W$. 
\begin{enumerate}
\item If $W^2/n\ll 1$, the eigenvectors of $H$
are exponentially localized to the scale $W^2$ and the local eigenvalue 
process rescales to a Poisson point process.
\item If $W^2/n\gg 1$, the eigenvectors of $H$ are delocalized 
and the local eigenvalue process resembles that of a 
GOE matrix. 
\end{enumerate} 
\end{conjecture}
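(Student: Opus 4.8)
The localization direction of Conjecture~\ref{conj:delocalization-localization}, at the level of eigenvector localization, is established by Theorem~\ref{thm:general-main}, which by the reductions indicated in the text rests entirely on Theorem~\ref{thm:single-energy-decay}; I describe how I would prove the latter. (The Poisson statistics asserted in part~(1) is a separate matter that would follow from Green's-function estimates of the present type combined with a Minami-type second-order Wegner bound, and the delocalization of part~(2) is the content of \cite{Ya-Yin,erdos-riabov}.) The plan for Theorem~\ref{thm:single-energy-decay} is to run the fractional-moment method in the block-chain picture: one regards $H$ as a one-dimensional chain of $N$ sites carrying the internal space $\mathbb{R}^{W}$, and sets up a finite-volume fractional-moment criterion in the spirit of the Anderson-localization literature, where a single ``box step'' is a \emph{window of $\asymp W$ consecutive blocks}. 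It is convenient to work at regularized energy $E+i\eta$, $\eta>0$, with all bounds uniform in $\eta$, and to let $\eta\downarrow 0$ at the end, which is legitimate since $G(E)$ exists almost surely (Proposition~\ref{prop:wegner}); given the single-energy estimate \eqref{eq:conclusion-of-thm-2}, the passage to \eqref{eq:conclusion-of-thm-1} is standard \cite{Aizenman-frac-moment,aizenman-warzel}.

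\emph{Step 1 (a priori bounds).} First I would record uniform Wegner-type fractional-moment bounds: for every block-subinterval $[a,b]$, blocks $i,j\in[a,b]$, and $q\le 1/5$,
\[
\mathbb{E}\,\bigl\|G_{[a,b]}(i,j)\bigr\|^{q}\ \le\ W^{C},
\]
uniformly in $|E|\le E_0$ and in $\eta$ — this is the role of Proposition~\ref{prop:wegner}, the polynomial $W$-loss being harmless against the $W^{C}$ prefactor in \eqref{eq:conclusion-of-thm-2}. The mechanism is the usual conditioning argument: freezing all randomness except one diagonal block $A_i$ reduces the estimate to a bound on $\mathbb{E}\,\sigma_{\min}(A_i-S)^{-q}$ for deterministic Hermitian $S$, and the $C^{2}$ log-density hypothesis \eqref{eq:general-distribution-condition} furnishes the anti-concentration estimate $\mathbb{P}\bigl(\sigma_{\min}(A_i-S)\le\varepsilon\bigr)\lesssim W^{C}\varepsilon$, hence a finite $q$-th inverse moment for every $q<1$.

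\emph{Step 2 (finite-window contraction — the crux).} The Schur-complement recursion $\Gamma_k:=G_{[k,N]}(k,k)=\bigl(A_k-E-B_k\Gamma_{k+1}B_k^{*}\bigr)^{-1}$ gives the telescoping representation $G(1,N)=(-1)^{N-1}\Gamma_1 B_1\Gamma_2 B_2\cdots B_{N-1}\Gamma_N$, and the analogous identity in any finite window. The goal of this step is to exhibit $L\asymp W$ and $\theta<1$, depending only on $M,E_0$, with
\[
\mathbb{E}\,\bigl\|G_{[1,L]}(1,L)\,B_L\bigr\|^{q}\ \le\ \theta .
\]
Taking $L$ of order $W$ is exactly what should make this true: the window $[1,L]$ is itself a band matrix with ``$W^2/n'$'' equal to a small constant, so a window of $\gg W$ blocks lies well beyond the localization length $W^{2}$ and its corner-to-corner Green's function should already be small; equivalently, the transfer-matrix product $\Gamma_1 B_1\cdots\Gamma_L$ — a product of $\asymp W$ symplectic $2W\times 2W$ maps — should contract in the relevant fractional sense over $\asymp W$ steps, which is the statement that the smallest positive Lyapunov exponent is $\gtrsim W^{-2}$ per site. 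I would attempt this by conditioning step by step on the $\sigma$-algebra generated by blocks $\ge k$, using the smoothness of the entry laws to decorrelate successive increments of the product, with a Ward-identity $\ell^2$-bound $\sum_{k}\|G_\eta(1,k)\|^{2}\le\eta^{-1}\operatorname{Im}\Tr G_\eta(1,1)$ as a natural auxiliary ingredient forcing the resolvent column out of block $1$ to spread over $\gtrsim W^{2}$ physical sites. This is the main obstacle, and the point at which the sharp scale $W^2$ — hence the borderline regime $W^{2}\ll n$ — is won: the factors $\Gamma_k B_k$ are not independent, so per-step bounds cannot simply be multiplied, and, more seriously, the per-step Wegner losses that confined earlier fractional-moment treatments to $W\ll n^{c}$ must be avoided entirely in favor of extracting a genuinely $O(1)$ gain per window of length $W$ rather than a $(1-W^{-O(1)})$ gain; this is where the $C^2$ log-density regularity is used most heavily.

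\emph{Step 3 (gluing windows).} Finally I would chain the windows $W_j:=[(j-1)L+1,\,jL]$ by a geometric resolvent expansion obtained by cutting the bonds $L,2L,3L,\dots$: this writes $G(1,N)$ as a sum over paths through the $W_j$, whose Green's functions are \emph{independent} across $j$ and are joined by single bridge couplings $B_{jL}$, while the imperfect-decoupling error terms are absorbed using the a priori bounds of Step~1. Applying the tower property window by window together with the contraction of Step~2 then yields $\bigl(\mathbb{E}\|G(1,N)w\|^{q}\bigr)^{1/q}\le CW^{C}\theta^{\lfloor N/L\rfloor/q}\le CW^{C}e^{-cN/W}$, which is \eqref{eq:conclusion-of-thm-2}.
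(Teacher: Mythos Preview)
Your Step~2 is where the argument breaks. You assert that a window of $L\asymp W$ blocks satisfies $\mathbb{E}\|G_{[1,L]}(1,L)B_L\|^{q}\le\theta<1$, but offer no mechanism beyond the heuristic that the smallest positive Lyapunov exponent ``should'' be of the right order and a vague appeal to step-by-step conditioning and Ward identities. This window contraction is not a weaker statement than the theorem you are trying to prove: Theorem~\ref{thm:single-energy-decay} applied with $N=L\asymp W$ only yields a bound $CW^{C}e^{-c}\gg 1$, so obtaining a genuine $\theta<1$ over a single window already requires the missing new idea. ``Conditioning step by step using the smoothness of the entry laws to decorrelate increments'' is precisely the program of \cite{Schenker-2009,Peled-Schenker-Shamis-Sodin,Charlie-Nixia,Schenker-Shapiro-etc-2022}, which saturated well above the $W^{2}$ threshold because each step incurred a Wegner-type loss; you do not say what you would do differently, and the Ward identity at parameter $\eta$ carries a factor $\eta^{-1}$ that you do not explain how to close.

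The paper's argument is structurally different and never groups blocks into windows. It extracts a gain of $e^{-c/W}$ from \emph{every single block step}, invoking the Wegner estimate only once at the very end. With $\log\|G(1,N)w\|=\sum_{j}\alpha_{j}$ and $\alpha_{j}=\log\|D_{j}^{-1}B_{j}v_{j}\|$, the new input (Lemmas~\ref{lem:single-step-variance-general}--\ref{lem:distortion-general}) is a \emph{rank-one} Mermin--Wagner perturbation $B_{j}\mapsto B_{j}\pm 2W^{-1/2}\ket{B_{j}v_{j}}\bra{v_{j}}$: it shifts $\alpha_{j}$ by exactly $\pm 2W^{-1/2}$, is compatible with the conditioning $\mathcal{F}=\sigma((D_{i},v_{i})_{i})$, and---because it moves only \emph{one column} of $B_{j}$ rather than the whole matrix---distorts the conditional density of $B_{j}$ by only an $O(1)$ factor on the event $\{\|B_{j}v_{j}\|,\|f_{j+1}(A_{j+1})v_{j}\|,\|B_{j}^{\ast}D_{j}^{-1}B_{j}v_{j}\|\lsim 1\}$. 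The last of these quantities is the delicate one (since $\|D_{j}^{-1}\|$ is typically of order $W$) and is handled by the deterministic dichotomy of Claim~\ref{claim:either-or}. The resulting scale-$W^{-1/2}$ conditional fluctuation of $\alpha_{j}$ is then converted to the per-step factor $e^{-c/W}$ by the elementary convexity Lemma~\ref{lem:elementary-jensens}. This rank-one perturbation, and the bound on $\|B_{j}^{\ast}D_{j}^{-1}B_{j}v_{j}\|$, are exactly the ideas absent from your proposal.
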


The first theoretical support for Conjecture \ref{conj:delocalization-localization} came from 
Fyodorov and Mirlin
\cite{Fyodorov-et-al-1991,Fyodorov-et-al-1993}, who used a non-rigorous 
supersymmetric approach to analyze a specific
Gaussian RBM decaying rapidly outside the band of width $W$. 
They showed a localization-delocalization transition occurred at $W^2/n\approx 1$, 
along with changes in the local eigenvalue statistics. Further support 
came from scaling arguments \cite{Thouless}, and arguments 
based on transfer matrices. We refer the reader 
to the survey \cite{physics-survey} and the references 
therein for more on the physics background. 

Recently, RBMs have received increased interest due to their connection 
to the Anderson model. Introduced in Anderson's seminal paper \cite{Anderson} as a model for 
electron transport,
the Anderson Hamiltonian $H$ on $\ell^2\left(\mathbb{Z}^d\right)$ is given by
$$\left(Hf\right)(x) = \left(\Delta_{\mathbb{Z}^d}f\right)(x) + \lambda V_xf(x) ,$$
where $\lambda>0$ is a coupling constant, $\Delta_{\mathbb{Z}^d}$ is the discrete Laplacian on
$\mathbb{Z}^d$, 
and $\left(V_x\right)_{x\in \mathbb{Z}^d}$ are i.i.d. random variables. 
Anderson argued $H$ exhibits a phase transition:
when $\lambda$ is large, $H$ is almost surely \textit{localized}, meaning 
it has an orthonormal basis in $\ell^2(\mathbb{Z}^d)$ of 
exponentially decaying eigenfunctions and when 
$\lambda$ is small and $d\geq 3$, $H$ 
is almost surely \textit{delocalized} in the sense that it has an interval 
of absolutely continuous spectrum, and non-$\ell^2$ eigenfunctions. 
This localization phenomenon, known as \textit{Anderson localization}, 
is well understood mathematically. It has been proven
in $d\geq 1$ for $\lambda$ large \cite{Frolich-Spencer,Aizenman-1994}, 
and in $d=1$ for any $\lambda>0$ \cite{Kunz-Souillard}. 
On the other hand, delocalization 
in $d\geq 3$ remains a major open problem, known as the \textit{extended states conjecture}.
RBMs exhibit both localization and delocalization, but interpolate
between the Anderson model, which is random only on the diagonal, 
and better understood mean-field models where most or all entries are random. 
We refer the reader to the surveys \cite{bourgade2018,Spencer} for more on the mathematical background of Conjecture \ref{conj:delocalization-localization}
and to the papers
\cite{dubova-yang-yau-yin,erdos-riabov,BDH,yang-yin,Schenker-Shapiro-etc-2022} 
for more on RBMs and their connection to the Anderson model.



\subsection{Prior Mathematical Work}

The first rigorous mathematical progress 
on the localization side of Conjecture \ref{conj:delocalization-localization} 
was due to Schenker \cite{Schenker-2009}. 
He proved localization in a large class of $n\times n$ Gaussian RBMs 
when $W^8/n\ll 1$, and for a
wider class of non-Gaussian models he proved 
there exists a $C>0$ such that 
localization holds when $W^C/n\ll 1$. The argument, explained below, 
combines a fluctuation estimate for $G(1,N)$ with an a-priori estimate, and 
was first proposed by Michael Aizenman. 

Subsequently, Schenker's argument was refined by 
several groups of authors. 
Using a new sharp Wegner estimate for Gaussian matrices, 
Peled, Schenker, Shamis, and Sodin \cite{Peled-Schenker-Shamis-Sodin}
proved localization in a class of Gaussian RBMs
when $W^7/n\ll 1$. Later, 
this was improved to $W^4/n\ll 1$ in the real Wegner orbital model,
which is obtained by taking $A_i$ and $B_i$ to be GOE and Ginibre matrices, 
respectively. This was done contemporaneously by
Chen and Smart \cite{Charlie-Nixia},
and Cipolloni, Peled, Schenker, and Shapiro \cite{Schenker-Shapiro-etc-2022}
(whose proof extends to mixtures of Gaussian models). Most recently, a preprint 
of Goldstein \cite{Goldstein} claims to prove
localization in the real Wegner orbital model when $N\geq W^2$, i.e. $W^3/n\ll 1$,
with eigenfunctions localized at the essentially sharp scale $(\log W)^3W^2$. 
It is possible that the ideas in \cite{Goldstein} 
may be sharpened to reach the regime $W^2/n\ll 1$ in that model; 
however, to our knowledge, the community has been unable to verify the proof, 
and shortly after posting \cite{Goldstein}, Goldstein passed away.

We also mention results using alternative approaches. 
In \cite{scherbina-1,scherbina-2,scherbina-3},
M. Shcherbina and T. Shcherbina 
rigorously implement supersymmetric methods to analyze  
a Gaussian RBM with a specific variance profile. 
In particular, they prove a transition at $W^2\approx n$ for the moments 
of characteristic polynomials and 
two-point functions. Finally, we mention the interesting work 
of Shapiro \cite{shapiro}, which is notable for its use of Lyapunov exponents. 
Using the explicit form of the Lyapunov exponents 
of Ginibre matrices, he proves 
a version of \eqref{eq:conclusion-of-thm-2} 
for a special Gaussian RBM when $E=0$ and $N$ is 
sufficiently large depending on $W$.

We emphasize that, beyond providing the first complete proof 
of the localization part of
Conjecture \ref{conj:delocalization-localization},
Theorem \ref{thm:general-main}
is one of the only localization results 
which does not rely on explicit computations with Gaussian densities 
or on symmetries and invariances.

The delocalization side of Conjecture \ref{conj:delocalization-localization}
was recently resolved. 
In the classical Wegner orbital model, i.e. $A_i$ and $B_i$ are GUE and complex Ginibre matrices, 
Yau and Yin \cite{Ya-Yin} proved delocalization
of eigenvectors, GUE-type eigenvalue statistics, and QUE 
when $W^2/n\gg 1$. 
The work of Erd\H{o}s and Riabov \cite{erdos-riabov}
extended these results to a very general class of band matrices. 
These two works are part of a recent series of spectacular breakthroughs 
on delocalization in RBMs. For instance see
\cite{dubova-yang-yau-yin,Xu-Y-cubed,2d-band-matrices,yang-yin}.

\subsection{Proof Sketch}
To begin, we briefly recall the outline of the Schenker method, 
introduced in \cite{Schenker-2009}.
First, we fix a unit vector $w\in \mathbb{R}^W$ and decompose $\log \left\|G\left(1,N\right)w\right\|$ 
into a sum of random variables. 
To do this we iterate the resolvent formula to obtain
\begin{equation}
\begin{aligned}\label{eq:decomposition-in-outline}
G\left(1,N\right) 
& = (-1)^{N-1}G_{\left[1,1\right]}\left(1,1\right)B_1G_{\left[1,2\right]}\left(2,2\right)\dots B_{N-1}G_{\left[1,N\right]}\left(N,N\right)\\
& :=  (-1)^{N-1}D_1^{-1}B_1D_2^{-1}...B_{N-1}D_N^{-1},
\end{aligned}
\end{equation}
where $D_{j}^{-1}:= G_{\left[1,j\right]}\left(j,j\right)$ is the $\left(j,j\right)$ block of $\left(H_{\left[1,j\right]}-E\right)^{-1}$.
Then we can write 
$$\log \left\|G\left(1,N\right)w\right\| = \sum_{j=1}^{N} \alpha_j$$
where $\alpha_j$ is $\log$ of the contribution from $D_j^{-1}B_j$ given by 
\begin{equation}\label{def:first-def}
\alpha_j := \log \left\|D_j^{-1}B_j v_j\right\|, \text{ where } v_j:=\frac{D_{j+1}^{-1}B_{j+1}...D_N^{-1}w}{\left\|D_{j+1}^{-1}B_{j+1}...D_N^{-1}w\right\|}
\end{equation}
for $j\in \left[1,N-1\right]$ and $\alpha_N := \log \left\|D_N^{-1}w\right\|.$

Second, one argues it essentially suffices to prove \textit{each $\alpha_j$ fluctuates 
at the scale $\frac{1}{\sqrt{W}}$}. To see this, suppose 
the $\alpha_j$ were completely independent so that 
for any $q\in \mathbb{R}$
\begin{equation}\label{eq:product-form-0}
\mathbb{E}\left\|G(1,N)w\right\|^q  = \prod_{j=1}^{N}\mathbb{E}e^{q\alpha_j}.
\end{equation}
By convexity, a random variable $X\in \mathbb{R}$ 
\textit{fluctuating at the scale $\epsilon$}, in the sense that 
\begin{equation}\label{eq:non-concentration}
\sup_{a\in \mathbb{R}}\mathbb{P}\left( \left|X-a\right|\leq \epsilon\right)\leq 1-c,
\end{equation}
satisfies
$$\left(\mathbb{E}e^{X}\right)^2 \leq e^{-c\epsilon^2}\left(\mathbb{E}e^{2X}\right),$$
if $\epsilon\in \left[0,1\right]$. See Lemma \ref{lem:elementary-jensens}. Hence if one proves 
\eqref{eq:non-concentration} with $\epsilon = \frac{1}{\sqrt{W}}$ for each $\alpha_j$, 
and the $\alpha_j$ were independent, \eqref{eq:product-form-0} 
would imply
$$
\left(\mathbb{E}\left\|G(1,N)w\right\|^{q}\right)^2
\leq   \prod_{j=1}^{N}e^{-cq^2W^{-1}}\mathbb{E}e^{2q\alpha_j} 
= e^{-cq^2\frac{N}{W}} \mathbb{E}\left\|G(1,N)w\right\|^{2q}
 \leq W^{C}e^{-cq^2\frac{N}{W}},
$$
as long as $0\leq 2q<1$ so that we can use the Wegner estimate (Proposition \ref{prop:wegner}) 
in the last inequality. In practice, the $\alpha_j$ 
are \textit{only independent conditioned on $\left(D_j, v_j\right)_{j\in \left[1,N\right]}$} 
(see Proposition~\ref{prop:schenker-product}), and so we need a version of \eqref{eq:non-concentration} which is 
\textit{conditional on realizations of $\left(D_j, v_j\right)_{j\in \left[1,N\right]}$}.

The main new contribution of this paper 
is a conditional version of \eqref{eq:non-concentration}. 
If the $A_i$ are GOE matrices, 
and we define the $\sigma$-algebra $\mathcal{F} = \sigma\left(\left(D_j, v_j\right)_{j\in \left[1,N\right]}\right)$, 
we prove
\begin{equation}\label{eq:single-step-variance-GOE}
\sup_{a\in \mathbb{R}}\mathbb{P}\left(\left|\alpha_j-a\right|\leq \frac{1}{\sqrt{W}} \,\middle| \, \mathcal{F}\right)\leq 1-c \mathbb{P}\left(\left\|B_jv_j\right\|, \left\|A_{j+1}v_{j}\right\|, \left\|B_j^{\ast} D_j^{-1}B_jv_j\right\|\lsim 1 \,\middle| \, \mathcal{F}\right).
\end{equation}
See Lemma \ref{lem:single-step-variance-general} for the general statement. 
The norms on the RHS are typically $\lsim 1$ and so Theorem \ref{thm:single-energy-decay} follows 
quickly by adapting the argument above. We note that proving $\left\|B_j^{\ast}D_j^{-1}B_jv_j\right\|\lsim 1$ 
is subtle and cannot come from a naive operator norm estimate because $\left\|D_j^{-1}\right\|$ is typically of size $W$.  See 
Claim \ref{claim:either-or} and the discussion above it. 

The idea of the proof of  \eqref{eq:single-step-variance-GOE} is as follows.
Once we have conditioned on $\left(D_j, v_j\right)_{j\in \left[1,N\right]}$, 
the only randomness left in $\alpha_j$ comes from $B_j$. Hence 
the problem reduces to proving that conditioned on 
$\mathcal{F}$, the random variable 
$\log \|B_jv_j\|,$ 
fluctuates at the scale $\frac{1}{\sqrt{W}}$. 
Note that conditioning on $\mathcal{F}$ changes 
the law of the $B_j$, and fixes the direction of $B_jv_j$. 
To lower bound the fluctuations we consider the effect of replacing $B_j$ via
\begin{equation}\label{eq:perturbation}
B_j \mapsto B_{j} \pm \frac{2}{\sqrt{W}} \ket{B_jv_j}\bra{v_j}, 
\end{equation} 
where $v_j$ is as in \eqref{def:first-def}. Both choices of sign in \eqref{eq:perturbation} vary $\alpha_j$ by $2W^{-1/2}$
and maintain the direction of $B_jv_j$. 
Hence, if we show at least one choice of sign in \eqref{eq:perturbation} 
decreases the density of $(B_j\ | \mathcal{F})$ by at most an $O(1)$ factor, we 
expect $\alpha_j$ to fluctuate at the scale $W^{-1/2}$.
This argument is abstractly formulated in Lemma \ref{lem:MW}. 
The change in the density of $\left(B_j|\mathcal{F}\right)$ 
under \eqref{eq:perturbation} is estimated by Taylor expansion, 
and the quantities $\left\|B_jv_j\right\|, \left\|A_{j+1}v_j\right\|,$ and $\left\|B_j^{\ast}D_j^{-1}B_jv_j\right\|$ 
arise naturally from this computation.  
See Lemma \ref{lem:distortion-general} for the details. 
The proof of Lemma \ref{lem:MW}
is related to Pfister's \cite{Pfister} and Dobrushin and Shlosman's \cite{Dobrushin-Shlosman} proof of the Mermin-Wagner 
Theorem in Statistical Mechanics, and we note this idea was
first introduced to RBMs in \cite{Schenker-Shapiro-etc-2022}.

We remark that if one writes $B_j$ in a basis with $v_j$ as the first basis vector, 
\eqref{eq:perturbation} simply multiplies the first 
column of $B_j$ by $1\pm 2W^{-1/2}$.  
Past arguments generated variation in $\alpha_j$ by essentially multiplying \textit{the entire matrix $B_j$}, 
by a factor $1\pm \delta$. This is a more expensive perturbation of $B_j$ 
in the sense that it creates a large change the density of $B_j$ 
but a relatively small fluctuation of $\left\|B_jv_j\right\|$.

\subsection{Further Directions and Open Questions}
We list some open questions related to this work.

\begin{enumerate}
\item \textbf{Other distributions of entries:}
For bounded reasonably smooth distributions, the needed Wegner estimate still holds, 
and it is likely one 
could prove a version of Lemma \ref{lem:single-step-variance-general}
via a similar approach.
On the other hand, for more singular distributions, like Bernoulli, 
the proof seems very difficult to adapt.
\item \textbf{Other band matrix models.}
The localization length of $W^2$
for eigenvectors  
is expected to be universal, in the sense 
that a version of Theorem \ref{thm:general-main} 
should hold irrespective of the specific form of the matrix. 
It would be ideal 
to have a result which holds 
for an even more general class of matrices. 
Other models of interest include 
the Block Anderson model in which the $B_i$ are 
identity matrices, or the 
proper RBM model in which the $B_i$ are 
lower triangular matrices. We hope to address these models 
in future work. 
\item \textbf{Poisson Eigenvalue Statistics.}
It is conjectured that in the localization regime, 
the eigenvalue process of
$H$, properly rescaled, converges to a Poisson process. 
To prove 
this one need to pair the localization result proven here, 
with sufficient control on the density of states. 
The latter is the main obstacle. See Section 1.2
of \cite{Schenker-2009} for a discussion. 
\item \textbf{Lyapunov Exponents.} 
Decay properties of the eigenfunctions of $H$ are connected to 
the study of products of the random 
$2W\times 2W$ matrices 
$$
T_i:=
\begin{pmatrix}
A_i -E & -B_i^{-1}\\
B_i & 0
\end{pmatrix}. 
$$
Indeed, Theorem \ref{thm:general-main}
is roughly equivalent to showing the
positive Lyapunov exponents of $T_1$
are larger than $\frac{c}{W}$. While 
Furstenberg's Theorem implies the qualitative statement 
that the Lyapunov exponents of $T_1$ are nonzero, 
quantitative statements seem difficult to prove. See the paper 
of Shapiro \cite{shapiro}, however, for a special case where 
this can de done.
For more on Lyapunov exponents and localization see 
the book \cite{carmona-lacroix}.

\item \textbf{Anderson Model on the Strip.}
It is conjectured that the Anderson model 
on the strip $\mathbb{Z}\times \left[1,W\right]$
has a localization length which is polynomial in $W$. 
The current best upper bound is $e^{CW\log(W)}$ 
due to Bourgain \cite{Bourgain-2013}. See 
\cite{Binder-Goldstein-Voda,Binder-Goldstein-Voda-2} for related results on 
this model.

\end{enumerate}

\subsection{Notation and Conventions}
We use $C,c>0$ as constants that
may vary in each appearance, 
and depend on $E_0, M$, but are always uniform in $N,W$. 
We say $X\lsim Y$ if $X\leq CY$. When writing probability 
densities we use $Z$ as a normalization constant.

We will often omit $E$ when writing $G(E)$, 
for instance writing $G(i,j)$ for the $W\times W$ matrix 
which is the $(i,j)$-th block of $G(E)$. Furthermore, 
for any $S\subset \left[1,N\right]$, we let 
$H_{S}$ be the restriction of $H$ to the rows and columns indexed by $S$, 
and $G_S$ be the corresponding resolvent, given by
$$G_{S}(E):= \left(H_S-E\right)^{-1}.$$

\subsection{Acknowledgements}
We thank Charles Smart for helpful discussions, and Adam Black 
and Felipe Hern\'andez for helpful comments. 
\section{Schenker Decomposition and Wegner Estimate}\label{sec:preliminaries}
In this section we recall two standard 
tools. First is a decomposition of $G\left(1,N\right)$
due to Schenker \cite{Schenker-2009}.  
It writes $G\left(1,N\right)$ as a product of 
matrices having some independence. 

\begin{proposition}(Schenker Decomposition)\label{prop:schenker-product}
Fix $E\in \mathbb{R}$ and define $D_1 := A_1-E$, and
$$D_{j+1} := A_{j+1} - E - B_j^{\ast}D_j^{-1}B_j,$$
for $j\in \left[1,N-1\right]$. Then 
\begin{equation}\label{eq:schenker-product}
G_{\left[1,N\right]}\left(1,N\right) = (-1)^{N-1}D_1^{-1}B_1D_2^{-1}...B_{N-1}D_N^{-1},
\end{equation}
and for any $j\in \left[1,N-1\right]$, $B_j$ is independent of $(B_i)_{i\neq j}$ conditioned on $D_{j}, D_{j+1}$.
\end{proposition}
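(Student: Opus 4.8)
The plan is to prove the product identity \eqref{eq:schenker-product} by a Schur-complement induction on the number of blocks, and then to obtain the conditional independence of $B_j$ from a Jacobian-one change of variables in the joint density of the entries of $H$.

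I would first record that, a.s., every $D_i$ is well-defined. By Proposition \ref{prop:wegner} applied to each of the band matrices $H_{[1,i]}$ (which are of the same form as $H$), with probability one $H_{[1,i]}-E$ is invertible for all $i\le N$ simultaneously; since $\det(H_{[1,i]}-E)=\det(H_{[1,i-1]}-E)\det\big(A_i-E-B_{i-1}^{\ast}(H_{[1,i-1]}-E)^{-1}B_{i-1}\big)$, this also forces every Schur complement that arises to be invertible, so each $D_i$ is a.s.\ invertible and symmetric (being a diagonal block of the inverse of a symmetric matrix). I would then prove by induction on $k$ that $G_{[1,k]}(k,k)=D_k^{-1}$ and $G_{[1,k]}(1,k)=(-1)^{k-1}D_1^{-1}B_1D_2^{-1}\cdots B_{k-1}D_k^{-1}$, with $D_k$ as defined recursively. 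Writing $H_{[1,k+1]}-E$ as a two-by-two block matrix whose first diagonal block is $H_{[1,k]}-E$ and whose only off-diagonal coupling is $B_k$ connecting the $k$-th block to block $k+1$, the standard block-inverse formulas give $G_{[1,k+1]}(k+1,k+1)=\big(A_{k+1}-E-B_k^{\ast}G_{[1,k]}(k,k)B_k\big)^{-1}$ and $G_{[1,k+1]}(1,k+1)=-G_{[1,k]}(1,k)B_k\, G_{[1,k+1]}(k+1,k+1)$; inserting the inductive hypotheses and the definition of $D_{k+1}$ closes the induction, and $k=N$ is \eqref{eq:schenker-product}. This step is routine bookkeeping.

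For the independence statement, fix $j\in[1,N-1]$. The structural observation is that, by the recursion, $D_j$ is a deterministic function of $(A_i)_{i\le j}$ and $(B_i)_{i<j}$ only, and in particular involves neither $A_{j+1}$ nor $B_j$. Hence the map that replaces $A_{j+1}$ by $D_{j+1}=A_{j+1}-E-B_j^{\ast}D_j^{-1}B_j$ and leaves all other entries untouched is, for fixed values of the other coordinates, a translation of $A_{j+1}$ within the space of symmetric $W\times W$ matrices, so it has Jacobian one. Since the entries of $H$ are independent with positive $C^2$ densities, the joint law of $\big((A_i)_i,(B_i)_i\big)$ has a product density; after this change of variables it is again a product of the marginal densities of all coordinates, except that the factor associated with $A_{j+1}$ becomes $p_{A_{j+1}}\!\big(D_{j+1}+E+B_j^{\ast}D_j^{-1}B_j\big)$, where $p_{A_{j+1}}$ denotes the density of $A_{j+1}$. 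Therefore, as a function of $B_j$ with all other new coordinates frozen, this density is proportional to $p_{A_{j+1}}\!\big(D_{j+1}+E+B_j^{\ast}D_j^{-1}B_j\big)\,p_{B_j}(B_j)$, which depends on the frozen coordinates only through $D_j$ and $D_{j+1}$. Thus the conditional law of $B_j$ given the $\sigma$-algebra $\mathcal G:=\sigma\big((A_i)_{i\ne j+1},(B_i)_{i\ne j},D_{j+1}\big)$ is a measurable function of $(D_j,D_{j+1})$ alone; since $\sigma(D_j,D_{j+1})\subseteq\mathcal G$, this is exactly $B_j\perp\mathcal G\mid(D_j,D_{j+1})$, and restricting the conditioning to the sub-$\sigma$-algebra $\sigma\big((B_i)_{i\ne j}\big)\subseteq\mathcal G$ yields the claim.

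I do not expect a serious obstacle: the product formula is bookkeeping, and the only genuine input in the second part is recognizing that $B_j$ enters the joint density, beyond its own marginal, solely through the combination $B_j^{\ast}D_j^{-1}B_j$ sitting inside the density of $A_{j+1}$, and that trading $A_{j+1}$ for $D_{j+1}$ turns this into a function of $(B_j,D_j,D_{j+1})$. The points that require care are purely technical: ensuring all $D_i$ are a.s.\ invertible so the densities and the change of variables make sense, verifying that the change of variables is genuinely triangular (hence has unit Jacobian) because $D_j$ depends only on the earlier blocks, and tracking which variables lie in which $\sigma$-algebra so that the final restriction of the conditioning is legitimate.
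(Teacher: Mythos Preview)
Your proposal is correct and follows essentially the same route as the paper. The product formula is obtained in both cases by iterating the resolvent/Schur-complement identities to identify $G_{[1,k]}(k,k)=D_k^{-1}$ and peel off one block at a time; the conditional independence is obtained in both cases from a Jacobian-one change of variables that makes explicit that $B_j$ enters the joint density only through $p_{A_{j+1}}\big(D_{j+1}+E+B_j^{\ast}D_j^{-1}B_j\big)\,p_{B_j}(B_j)$. The only cosmetic difference is that the paper changes \emph{all} the $A_i$ to $D_i$ at once (the triangular map $\Phi$ with unit Jacobian), whereas you change only $A_{j+1}\mapsto D_{j+1}$ and keep the remaining $A_i$ as coordinates; both are valid and lead to the same factorization.
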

A proof can be found in Section 4 of \cite{Schenker-2009}.
We briefly recall it here for the readers convenience. 

\begin{proof}
Expanding $\left(H-E\right)^{-1}$ about $\left(H_{\left[1,N-1\right]}-E\right)^{-1}$ 
via the resolvent formula gives 
$$G_{\left[1,N\right]}\left(1,N\right) = -G_{\left[1,N-1\right]}(1,N-1)B_{N-1}G_{\left[1,N\right]}\left(N,N\right).$$
Iterating gives
\begin{equation}\label{eq:schenker-product-step-1}
    G_{\left[1,N\right]}\left(1,N\right) = (-1)^{N-1}G_{\left[1,1\right]}\left(1,1\right)B_1G_{\left[1,2\right]}\left(2,2\right)...B_{N-1}G_{\left[1,N\right]}\left(N,N\right).
\end{equation}
To show \eqref{eq:schenker-product} 
it remains to prove $D_j^{-1} = G_{[1,j]}(j,j)$. 
For this, note that the Schur complement formula (Proposition \ref{prop:schur-complement} in the appendix) gives
$$G_{\left[1,j\right]}\left(j,j\right) = \left(A_j - E - B_j^{\ast}G_{[1,j-1]}(j-1,j-1)B_j\right)^{-1}$$
for each $j\in \left[2,N\right]$. Since $G_{\left[1,1\right]}\left(1,1\right) = \left(A_1-E\right)^{-1}$, relation \eqref{eq:schenker-product} follows. 

The last observation can be seen from the joint density 
of $(D_i)_{i\in \left[1,N\right]}$ and $(B_i)_{i\in \left[1,N-1\right]}$. If $\phi_{1,i}$ and $\phi_{2,i}$ are
the densities of $A_i$ and $B_i$, then the density of the random variable $\left(\left(D_i\right)_{i\in \left[1,N\right]}, \left(B_i\right)_{i\in \left[1,N-1\right]}\right)$
is given by 
\begin{equation}\label{eq:joint-density}
\phi_{1,1}\left(D_1+E\right)\prod_{j=1}^{N-1}\phi_{1,{j+1}}\left(D_{j+1}+E+B_j^{\ast}D_j^{-1}B_j\right)\prod_{j=1}^{N-1}\phi_{2,j}\left(B_j\right).
\end{equation}
Here we used that the $A_i$ and $B_i$ are all independent and changed variables 
with the map $\Phi:\left(A_1,...,A_N,B_1,...,B_{N-1}\right)\mapsto \left(D_1,...,D_N,B_1,...,B_{N-1}\right),$
noting that it has jacobian 1. 
In \eqref{eq:joint-density}, $B_j$ only appears in the product in terms with $D_j$ and $D_{j+1}$, proving the proposition.  
\end{proof}

We will also use the following standard Wegner estimate, 
see Theorem 7 in \cite{Schenker-2009} or Lemma 1 in \cite{Bourgain-2013}. 
For completeness we include a proof in the appendix. 
\begin{proposition}(Wegner Estimate)\label{prop:wegner}
Let $M>0$ and $H$ be as in Theorem \eqref{thm:general-main}.
Then, for any $E\in \mathbb{R}$, $\lambda>0$,  and $i,j\in \left[1,N\right]$, we have
$$\mathbb{P}\left(\left\|G\left(i,j\right)\right\|>\lambda\right) \lsim W^{3/2}\lambda^{-1}.$$
\end{proposition}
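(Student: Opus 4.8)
The plan is to use the randomness of the two diagonal blocks $A_i$ and $A_j$ through the classical rank-one spectral averaging identity. One cannot simply bound $\left\|G(i,j)\right\|\leq \left\|G(E)\right\| = \mathrm{dist}(E,\sigma(H))^{-1}$ and count eigenvalues of $H$, since that loses a factor of $N$; the point is to localize to blocks $i$ and $j$. Concretely, fix $E$ with $G(E)$ defined (which holds a.s.) and $i\leq j$, let $P$ be the orthogonal projection onto the coordinate blocks $\{i,j\}$, and note $\left\|G(i,j)\right\|\leq \left\|PG(E)P\right\|$ since $G(i,j)$ is a sub-block of $PG(E)P$. By the Schur complement formula (Proposition \ref{prop:schur-complement}), $PG(E)P$ restricted to the range of $P$ equals $\mathcal{M}^{-1}$, where
\[
\mathcal{M} := P(H-E)P - P(H-E)(I-P)\big((I-P)(H-E)(I-P)\big)^{-1}(I-P)(H-E)P
\]
is a Hermitian matrix of size $2W$ (or $W$, if $i=j$), and the inverse in the middle exists a.s. The crucial structural point is that the subtracted ``self-energy'' term involves only $B_{i-1},B_i,B_{j-1},B_j$ and the blocks $A_k$ with $k\notin\{i,j\}$, so that upon conditioning on $\mathcal{G}:=\sigma\big((A_k)_{k\notin\{i,j\}},(B_k)_k\big)$ we may write $\mathcal{M} = \mathrm{diag}(A_i,A_j) + K$ with $K$ a fixed Hermitian matrix, and $A_i,A_j$ still independent of $\mathcal{G}$ with their original laws.

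Next I would record the one quantitative input from $M$-regularity: if $\phi$ is an $M$-regular density and $g=\log\phi$, then Taylor-expanding $g$ about an arbitrary point $x_0$ and using $\left\|g''\right\|_{L^\infty}\leq M$ gives $\phi(x)\geq \phi(x_0)\exp\big(g'(x_0)(x-x_0)-\tfrac{M}{2}(x-x_0)^2\big)$; integrating in $x$ and using $\int\phi=1$ forces $\phi(x_0)\leq \sqrt{M/2\pi}$. Hence every entry of $\sqrt{W}H$ has density $\leq \sqrt{M/2\pi}$, and in particular each diagonal entry of $A_i$ (equivalently, each diagonal entry of $\mathcal{M}$, up to an additive $\mathcal{G}$-measurable constant) has density $\lsim W^{1/2}$.

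Now $\left\|G(i,j)\right\|>\lambda$ forces $\left\|\mathcal{M}^{-1}\right\|>\lambda$, i.e. $\mathcal{M}$ has an eigenvalue in $(-1/\lambda,1/\lambda)$, so since $\mathbf{1}_{(-\ep,\ep)}(x)\leq \frac{2\ep^2}{x^2+\ep^2} = 2\ep\,\mathrm{Im}(x-i\ep)^{-1}$, functional calculus gives
\[
\P\big(\left\|G(i,j)\right\|>\lambda \,\big|\, \mathcal{G}\big) \;\leq\; \mathbb{E}\big[\Tr\,\mathbf{1}_{(-1/\lambda,1/\lambda)}(\mathcal{M})\,\big|\,\mathcal{G}\big] \;\leq\; \tfrac{2}{\lambda}\,\mathbb{E}\big[\Tr\,\mathrm{Im}(\mathcal{M}-i/\lambda)^{-1}\,\big|\,\mathcal{G}\big].
\]
It remains to show $\mathbb{E}\big[\Tr\,\mathrm{Im}(\mathcal{M}-i\ep)^{-1}\,\big|\,\mathcal{G}\big]\lsim W^{3/2}$, uniformly in $\ep>0$. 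For each standard basis vector $f$ of the range of $P$, the diagonal entry $(\mathcal{M})_{ff}$ is a free diagonal entry $d$ of $A_i$ or $A_j$ plus a constant, so writing $\mathcal{M}=\mathcal{M}_0+d\,ff^{\ast}$ with $\mathcal{M}_0$ independent of $d$, Sherman--Morrison gives $\langle f,(\mathcal{M}-i\ep)^{-1}f\rangle = F/(1+dF)$ with $F := \langle f,(\mathcal{M}_0-i\ep)^{-1}f\rangle$ having strictly positive imaginary part. An elementary computation (write $F=a+ib$, $b>0$, and integrate $b/((1+da)^2+d^2b^2)$) shows $\int_{\mathbb{R}}\mathrm{Im}\,\frac{F}{1+dF}\,dd = \pi$ for every such $F$. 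Hence $\mathbb{E}\big[\mathrm{Im}\langle f,(\mathcal{M}-i\ep)^{-1}f\rangle\,\big|\,\mathcal{M}_0\big] \leq \pi\left\|\phi_d\right\|_{L^\infty}\lsim W^{1/2}$; summing over the $\leq 2W$ basis vectors and taking expectations yields $\mathbb{E}\big[\Tr\,\mathrm{Im}(\mathcal{M}-i\ep)^{-1}\,\big|\,\mathcal{G}\big]\lsim W^{3/2}$, and combining with the display above and then integrating over $\mathcal{G}$ gives $\P(\left\|G(i,j)\right\|>\lambda)\lsim W^{3/2}\lambda^{-1}$.

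No step here is genuinely hard; the only thing that requires care is the bookkeeping in the first paragraph — checking that the self-energy part of $\mathcal{M}$ is independent of $A_i$ and $A_j$, so that conditioning on $\mathcal{G}$ leaves their laws (hence the density bound) intact, and checking that the standard rank-one spectral averaging identity applies verbatim to each diagonal entry of the $2W\times 2W$ matrix $\mathcal{M}$. The factor $W^{3/2}$ is exactly $(\#\text{ of averaged diagonal entries})\times(\text{density bound}) \asymp W\times W^{1/2}$.
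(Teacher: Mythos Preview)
Your proof is correct and follows essentially the same approach as the paper: reduce via Schur complement to a $2W\times 2W$ Hermitian matrix $\mathcal{M}=\mathrm{diag}(A_i,A_j)+K$ with $K$ independent of $A_i,A_j$, bound the eigenvalue-counting function by $\mathrm{Im}\,\Tr$ of the resolvent, and then average over each free diagonal entry using the uniform density bound coming from $M$-regularity. The only cosmetic differences are that you invoke Sherman--Morrison and the spectral-averaging identity $\int \mathrm{Im}\,\frac{F}{1+dF}\,dd=\pi$, whereas the paper uses the Schur complement again at the single-entry level and the equivalent Poisson-kernel bound $\int \mathrm{Im}(x-z_0)^{-1}\phi(x)\,dx\leq \pi\|\phi\|_{L^\infty}$; you also spell out the bound $\|\phi\|_{L^\infty}\leq\sqrt{M/2\pi}$, which the paper leaves implicit.
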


\section{Fluctuations Lower Bound}

In this section we prove the key lemma. 
Applying Proposition \ref{prop:schenker-product}
to $G(1,N)w$, we define the vectors $v_N := w$ and
\begin{equation}\label{def:v_j}
v_{j}:= \frac{D_{j+1}^{-1}B_{j+1}...D_N^{-1}w}{\left\|D_{j+1}^{-1}B_{j+1}...D_N^{-1}w\right\|},
\end{equation}
for all $j\in \left[1,N-1\right]$. Similarly, we define the quantities $\alpha_N := \log \left\|D_N^{-1}v_N\right\|$ 
and 
\begin{equation}\label{def:alpha_j}
  \alpha_j:= \log \left\|D_j^{-1}B_jv_j\right\| 
\end{equation}
for all $j\in \left[1,N-1\right]$. By Proposition \ref{prop:schenker-product}, we have 
that $\log \left\|G\left(1,N\right)w\right\| = \sum_{i=1}^N\alpha_i,$
and that conditioned on the $\sigma$-algebra 
\begin{equation}\label{def:sigma-algebra}
  \mathcal{F} := \sigma\left(\left(D_i,v_i\right)_{i\in \left[1,N\right]}\right),
\end{equation}
the random variables $(\alpha_i)_{i\in \left[1,N\right]}$ 
are independent.
Finally, define the map $f_j:\mathbb{R}^{W\times W}_{Sym}\to \mathbb{R}^{W\times W}_{Sym}$ by
\begin{equation}\label{def:f-function}
f_{j}\left(A\right) := \frac{1}{W}\nabla\left(\log\phi_{1,j}\right)(A),
\end{equation}
where the gradient $\nabla$ is with respect inner product $\langle A, B\rangle = Tr(AB)$ on $\mathbb{R}^{W\times W}_{Sym}$. 
Note that if $A_j$ is a normalized 
GOE matrix, then $\phi_j(A) = e^{-\frac{W}{4}\text{Tr}A^2}$, and so 
$f_j(A) = \frac{1}{2}A.$ We encourage the reader to keep the GOE case in
mind upon first reading. The following says that \textit{conditioned on}
$\left(D_i, v_i\right)_{i\in \left[1,N\right]}$, 
$\alpha_j$ fluctuates at scale 
scale $W^{-1/2}$, as long as the quantities on the RHS below are typically $\lsim 1$.

\begin{lemma}\label{lem:single-step-variance-general}
Let $M>0$ and $H$ be as in Theorem~\ref{thm:general-main}. 
Then, for any $j\in \left[1,N-1\right]$, $E\in \mathbb{R}$, $t\geq 1$, and unit vector $w\in \mathbb{R}^{W}$,
$$\sup_{a\in \mathbb{R}}\mathbb{P}\left(\left|\alpha_j - a\right| \leq \frac{1}{\sqrt{W}}  \,\middle| \, \mathcal{F}\right)
\leq 1 - e^{-Ct}\mathbb{P}\left(\left\|B_jv_j\right\|, \left\|f_{j+1}(A_{j+1})v_j\right\|, \left\|B_j^{\ast}D_j^{-1}B_jv_j\right\| \leq t \,\middle| \, \mathcal{F}\right).$$
Here $v_j$, $\alpha_j$, $\mathcal{F}$ and $f_j$ are as given in \eqref{def:v_j}, \eqref{def:alpha_j}, \eqref{def:sigma-algebra} and \eqref{def:f-function} respectively. 
\end{lemma}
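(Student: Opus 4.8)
The plan is to reduce the statement to the abstract anti-concentration principle of Lemma~\ref{lem:MW} (the Mermin--Wagner--Pfister-type lemma) by exhibiting, for each fixed realization of $\mathcal{F}$, an explicit one-parameter family of perturbations of $B_j$ that moves $\alpha_j$ linearly at speed $W^{-1/2}$ while not destroying too much of the conditional density of $B_j$. Concretely, after conditioning on $\mathcal{F} = \sigma((D_i,v_i)_{i\in[1,N]})$, the only remaining randomness in $\alpha_j = \log\|D_j^{-1}B_jv_j\|$ is that of $B_j$, whose conditional density (read off from \eqref{eq:joint-density}) is proportional to
\[
\phi_{2,j}(B_j)\,\phi_{1,j+1}\!\left(D_{j+1}+E+B_j^{\ast}D_j^{-1}B_j\right),
\]
restricted to the event that $D_{j+1}^{-1}B_j v_{j+1}$ points in the direction $v_j$ (this is the constraint that the $v_i$ are consistent). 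I would consider the perturbation $B_j \mapsto B_j^{(s)} := B_j + \tfrac{s}{\sqrt W}\ket{B_j v_j}\bra{v_j}$ as in \eqref{eq:perturbation}, noting two things: first, $B_j^{(s)} v_j = (1+ s/\sqrt W)\, B_j v_j$, so this preserves the direction of $B_j v_j$ (hence preserves $v_{j-1}$ and the whole conditioning structure) and shifts $\alpha_j$ by exactly $\log(1+s/\sqrt W)$; second, the map $B_j \mapsto B_j^{(s)}$ has Jacobian $(1+s/\sqrt W)$ and moves $\alpha_j$ by $\approx s/\sqrt W$ for $|s|\le 2$. So $s=\pm 2$ gives a genuine $\Theta(W^{-1/2})$ shift in both directions.

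The core computation is then to Taylor-expand the log-density along this family and control the derivative $\frac{d}{ds}$ at $s=0$. Writing $\mathcal L(B_j)$ for the log of the conditional density, the chain rule together with the definition \eqref{def:f-function} of $f_{j+1}$ (so that $\nabla\log\phi_{1,j+1} = W f_{j+1}$) produces terms of the shape
\[
\tfrac{1}{\sqrt W}\,\bra{v_j}\nabla\log\phi_{2,j}(B_j)\,B_j v_j\rangle,\qquad
\sqrt W\,\langle f_{j+1}(\,\cdot\,),\ \partial_s\!\left(B_j^{\ast}D_j^{-1}B_j\right)\rangle,
\]
and the second of these, upon differentiating $B_j^{\ast}D_j^{-1}B_j$ in $s$, expands into $\langle f_{j+1}(\cdot), \ket{v_j}\bra{B_j v_j}D_j^{-1}B_j + (D_j^{-1})^{\ast}\ B_j \ket{B_j v_j}\bra{v_j}\rangle$ up to the $(1+s/\sqrt W)$ factors; pairing against the rank-one pieces, this is bounded by $\|f_{j+1}(A_{j+1})v_j\|$ times $\|B_j^{\ast}D_j^{-1}B_j v_j\|$ and $\|B_j v_j\|$ (recall $A_{j+1} = D_{j+1}+E+B_j^\ast D_j^{-1} B_j$ is $\mathcal F$-measurable-plus-$B_j$-dependent but its argument appears precisely as the point at which $f_{j+1}$ is evaluated). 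The first term is handled by the $M$-regularity bound \eqref{eq:general-distribution-condition} on $\log\phi_{2,j}$ — here one uses $\|X\|_{\infty}$ on the second derivative of $\log\phi$ plus $\mathbb E X^2 \le 1$ to trade a bare $\nabla\log\phi_{2,j}(B_j)$ against $\|B_j v_j\|$ at cost $O(1)$ — so that on the event $\{\|B_j v_j\|,\ \|f_{j+1}(A_{j+1})v_j\|,\ \|B_j^\ast D_j^{-1}B_j v_j\| \le t\}$ the $s$-derivative of $\mathcal L$ is $\lesssim t$. Integrating, the density ratio between $B_j^{(s)}$ and $B_j$ is $\ge e^{-Ct}$ for $|s|\le 2$. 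Feeding this density lower bound and the $\Theta(W^{-1/2})$ displacement into Lemma~\ref{lem:MW} yields the claimed
\[
\sup_{a}\ \P\!\left(|\alpha_j - a|\le \tfrac{1}{\sqrt W}\ \middle|\ \mathcal F\right)\ \le\ 1 - e^{-Ct}\,\P\!\left(\|B_j v_j\|,\ \|f_{j+1}(A_{j+1})v_j\|,\ \|B_j^\ast D_j^{-1}B_j v_j\|\le t\ \middle|\ \mathcal F\right).
\]

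The main obstacle is the middle term $\langle f_{j+1}(A_{j+1}),\, \partial_s(B_j^\ast D_j^{-1}B_j)\rangle$: because $\|D_j^{-1}\|$ is typically of order $W$, one cannot afford an operator-norm bound $\|D_j^{-1}B_j\|\,\|B_j v_j\|$ — the whole point is that after pairing against the rank-one perturbation $\ket{B_j v_j}\bra{v_j}$, only the specific vectors $B_j^\ast D_j^{-1}B_j v_j$ and $B_j v_j$ enter, and the rank-one structure must be exploited exactly so that the dangerous $D_j^{-1}$ only ever hits $B_j v_j$ (never gets multiplied by $\sqrt W$ in a way that isn't absorbed). A secondary technical point is bookkeeping the consistency constraint on $v_{j-1}$ under the perturbation — since $B_j^{(s)} v_j$ stays parallel to $B_j v_j$, the conditioning event is genuinely invariant, so this is a matter of care rather than a real difficulty. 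Finally one should double-check that when $\phi_{1,j+1}$ is Gaussian (so $f_{j+1}(A) = \tfrac12 A$), the bound degenerates to the GOE statement \eqref{eq:single-step-variance-GOE}, which it does since $\|f_{j+1}(A_{j+1})v_j\| = \tfrac12\|A_{j+1}v_j\|$.
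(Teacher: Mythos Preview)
Your overall plan --- the rank-one perturbation $B_j\mapsto B_j+\tfrac{s}{\sqrt W}\ket{B_jv_j}\bra{v_j}$ followed by an appeal to Lemma~\ref{lem:MW} --- is exactly the paper's strategy. But the density computation you outline has a genuine gap: you propose to bound the \emph{first} derivative $\tfrac{d}{ds}\mathcal L(B_j^{(s)})$ and integrate, and this derivative is of order $\sqrt W$, not $O(t)$. Already in the Gaussian case $\phi_{2,j}(B)\propto e^{-\frac{W}{2}\|B\|_F^2}$ one gets
\[
\frac{d}{ds}\Big|_{s=0}\log\phi_{2,j}\bigl(B_j^{(s)}\bigr)
=\Bigl\langle -WB_j,\ \tfrac{1}{\sqrt W}\ket{B_jv_j}\bra{v_j}\Bigr\rangle_F
=-\sqrt W\,\|B_jv_j\|^2,
\]
so your claim that $M$-regularity lets you ``trade $\nabla\log\phi_{2,j}(B_j)$ against $\|B_jv_j\|$ at cost $O(1)$'' is off by a factor $\sqrt W$. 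The Jacobian contributes the same problem: on $\mathbb R^{W\times W}$ one has $\det T_{\delta,v}=(1+\delta)^{W}$ (not $1+\delta$; see Proposition~\ref{prop:properties-of-T}), and the conditional density of $(B_j\mid\mathcal F)$ carries an additional factor $\|Bv_j\|^{W-1}$ from conditioning on the direction $v_{j-1}$, which you omit. Each of these also has $s$-log-derivative of size $\sqrt W$. Integrating over $|s|\le 2$ therefore only gives a one-sided density ratio $\ge e^{-C\sqrt W\,t}$, which is useless.

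What rescues the argument --- and this is precisely how the paper proceeds --- is that Lemma~\ref{lem:MW} never asks for a lower bound on $T_\pm^{\#}\Phi/\Phi$ individually, only on the \emph{product}
\[
\frac{T_+^{\#}\Phi}{\Phi}\cdot\frac{T_-^{\#}\Phi}{\Phi},
\]
i.e.\ on the symmetric second difference $\log F_j(T_{\delta,v}B)+\log F_j(T_{-\delta,v}B)-2\log F_j(B)$. In this combination the order-$\sqrt W$ linear terms cancel exactly, and only second-order terms of size $\delta^2 W=O(1)$ survive; the $M$-regularity bound \eqref{eq:general-distribution-condition} is used on the \emph{second} derivative, not the first. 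This is Lemma~\ref{lem:distortion-general}. The three quantities $\|B_jv_j\|$, $\|f_{j+1}(A_{j+1})v_j\|$, $\|B_j^\ast D_j^{-1}B_jv_j\|$ arise from that second-order remainder, and the Jacobian/$\|Bv_j\|^{W-1}$ factors combine to $(1-\delta^2)^{2W-1}\gtrsim 1$. So your perturbation is right, but you must estimate the symmetric product rather than a single ratio.
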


The idea is to replace $B_j$ with $B_j \pm \delta \ket{B_jv_j}\bra{v_j}$ 
and compute the distortion of the density of $B_j$. 
Either choice of sign changes $\alpha_j$ 
by $\delta$, and so if one of the choices usually decreases the density of 
$B_j$ by at most an $O(1)$ factor, then $\alpha_j$ should fluctuate at the scale $\delta$.
This argument is made rigorous in the following Mermin--Wagner type lemma. 
The work \cite{Schenker-Shapiro-etc-2022} was the first to lower bound 
 fluctuations in this setting using a Mermin-Wagner argument. 
On first 
reading it is instructive to imagine $X$ is a Gaussian on $\mathbb{R}$,
$T_{\pm}(x) = \left(1\pm \delta\right)x$, and $g(x) = \log \left|x\right|$. 

\begin{lemma}\label{lem:MW}
  Let $X\in \mathbb{R}^d$ be a random variable 
  with a continuous density $\Phi: \mathbb{R}^d\to \mathbb{R}_{>0}$
  and 
  $g:\mathbb{R}^d\to \mathbb{R}$ be a measurable function. If there exists 
  $\delta>0$ and  smooth diffeomorphisms
  $T_+,T_-:\mathbb{R}^d\to \mathbb{R}^d$ 
  such that 
  \begin{align*}
  g(T_{\pm}(x))& = g(x) \pm \delta,
  \end{align*}
  for almost every $x\in \mathbb{R}^d$, then for any $t\in \left[0,1\right]$
  $$\sup_{a\in \mathbb{R}}\mathbb{P}\left(\left|g(X)-a\right|\leq\frac{\delta}{2}\right) \leq 1- c\sqrt{t}\mathbb{P}\left(\frac{T_+^{\#}\Phi}{\Phi}(X) \frac{T_-^{\#}\Phi}{\Phi}(X)\geq t\right).$$
  \end{lemma}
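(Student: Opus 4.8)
The plan is to bound the probability that $g(X)$ lies in a window $[a-\delta/2, a+\delta/2]$ by transporting mass out of that window using the diffeomorphisms $T_\pm$. The key observation is that since $g(T_\pm(x)) = g(x) \pm \delta$, the maps $T_+$ and $T_-$ shift the value of $g$ by exactly $\pm\delta$, so they move any point with $g(x) \in [a-\delta/2, a+\delta/2]$ to a point where $g$ lies in $[a+\delta/2, a+3\delta/2]$ or $[a-3\delta/2, a-\delta/2]$ respectively — in particular, outside the original window. Thus the events $\{g(X) \in [a-\delta/2,a+\delta/2]\}$, $\{g(X) \in [a+\delta/2, a+3\delta/2]\}$, and $\{g(X) \in [a-3\delta/2, a-\delta/2]\}$ are disjoint, and pushing forward the first under $T_\pm$ lands inside the second and third.

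The execution proceeds as follows. Let $A = \{x : g(x) \in [a-\delta/2, a+\delta/2]\}$. For a fixed threshold $t \in [0,1]$, split $A$ into the part $A_t := A \cap \{x : \frac{T_+^{\#}\Phi}{\Phi}(x)\,\frac{T_-^{\#}\Phi}{\Phi}(x) \geq t\}$ and its complement in $A$. On $A_t$, at least one of the two ratios $\frac{T_+^{\#}\Phi}{\Phi}$ or $\frac{T_-^{\#}\Phi}{\Phi}$ is at least $\sqrt{t}$ (since their product is $\geq t$); partition $A_t = A_t^+ \sqcup A_t^-$ accordingly, with $\frac{T_\pm^{\#}\Phi}{\Phi} \geq \sqrt{t}$ on $A_t^\pm$. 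Now I would estimate
$$\mathbb{P}(X \in A_t^+) = \int_{A_t^+} \Phi(x)\,dx \leq \frac{1}{\sqrt{t}}\int_{A_t^+} T_+^{\#}\Phi(x)\,dx = \frac{1}{\sqrt{t}}\,\mathbb{P}\bigl(X \in T_+(A_t^+)\bigr),$$
using the change-of-variables / pushforward identity $\int_S T_+^{\#}\Phi = \mathbb{P}(X \in T_+(S))$, and likewise for $A_t^-$ with $T_-$. Since $T_+(A_t^+) \subset \{g \in [a+\delta/2, a+3\delta/2]\}$ and $T_-(A_t^-) \subset \{g \in [a-3\delta/2, a-\delta/2]\}$, and these two target sets together with $A$ are pairwise disjoint, we get
$$\mathbb{P}(X \in A_t) = \mathbb{P}(X \in A_t^+) + \mathbb{P}(X \in A_t^-) \leq \frac{1}{\sqrt{t}}\Bigl(\mathbb{P}(X \in T_+(A_t^+)) + \mathbb{P}(X \in T_-(A_t^-))\Bigr) \leq \frac{1}{\sqrt{t}}\bigl(1 - \mathbb{P}(X \in A)\bigr).$$
Writing $p := \mathbb{P}(X \in A)$ and $\rho := \mathbb{P}(\frac{T_+^{\#}\Phi}{\Phi}(X)\frac{T_-^{\#}\Phi}{\Phi}(X) \geq t)$, we have $\mathbb{P}(X \in A_t) \geq p - (1-\rho)$ as well (since $A_t^c \cap A$ has probability at most $1-\rho$), so $p - (1-\rho) \leq \frac{1}{\sqrt t}(1-p)$, which rearranges to $p \leq 1 - \frac{\sqrt t}{1+\sqrt t}\rho \leq 1 - \tfrac{1}{2}\sqrt{t}\,\rho$ for $t \in [0,1]$. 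Taking the supremum over $a$ gives the claim with $c = 1/2$.

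I expect the main technical point — though not a deep one — to be handling the pushforward/change-of-variables identity cleanly: one must verify that $T_\pm^{\#}\Phi$ is the density of the law of $T_\pm(X)$, i.e. $T_\pm^{\#}\Phi(x) = \Phi(T_\pm^{-1}(x))\,|\det D T_\pm^{-1}(x)|$, and that $\int_S T_\pm^{\#}\Phi(x)\,dx = \mathbb{P}(X \in T_\pm(S)) = \mathbb{P}(T_\pm(X) \in T_\pm(S))$ — wait, more precisely $\int_S T_\pm^\#\Phi = \mathbb{P}(X \in T_\pm^{-1}(S))$, so I should track the direction of the map carefully and make sure the inclusions above are stated for the correct image/preimage. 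The smoothness and diffeomorphism hypotheses on $T_\pm$ and continuity of $\Phi$ are exactly what make this substitution valid, and the measurability of $g$ ensures the level sets $A$ are measurable. A secondary point is the bookkeeping to ensure the three $g$-level windows are genuinely disjoint, which forces the specific choice of window width $\delta/2$ in the statement. Everything else is elementary; no concentration or probabilistic input beyond Markov's inequality (applied to the density ratio against the pushforward measure) is needed.
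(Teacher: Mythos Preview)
Your argument is correct and follows essentially the same route as the paper's proof: both use that on the good event one of the ratios $T_\pm^{\#}\Phi/\Phi$ is at least $\sqrt{t}$, then change variables by $T_\pm$ to push mass from the window $\{|g-a|\leq \delta/2\}$ into its complement; the paper packages this as a short proof by contradiction using a single sign, while you give the equivalent direct estimate using both signs. Your first instinct on the pullback was right: with the paper's convention $T^{\#}\Phi(x)=\Phi(T(x))\,|\!\det DT(x)|$ one has $\int_S T^{\#}\Phi = \mathbb{P}(X\in T(S))$, so the inclusion $T_\pm(A_t^\pm)\subset\{|g-a\mp\delta|\leq \delta/2\}$ is exactly what you need and your ``correction'' should be discarded.
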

  Recall that $T^{\#}_{\pm}$ is the pullback map associated to $T_{\pm}$, and is given by 
  \[T^{\#}_{\pm}\Phi\left(x\right) = \Phi\left(T_{\pm}(x)\right)\text{Jac}\left(T_{\pm}\right)\left(x\right).\]   
\begin{proof}
Define 
$$p := \mathbb{P}\left(\frac{T_+^{\#}\Phi}{\Phi}(X) \frac{T_-^{\#}\Phi}{\Phi}(X)\geq t\right),$$
and suppose for contradiction, that for some $a_0\in \mathbb{R}$, 
\begin{equation}\label{eq:contradiction-assumption}
  \mathbb{P}\left(\left|g(X)-a_0\right|\leq \frac{\delta}{2}\right) \geq 1-\frac{1}{8}\sqrt{t}p.
\end{equation}
Since $t\in [0,1]$, the law of total probability implies that, for either $\sigma = +$ or $\sigma = -$, 
$$\mathbb{P}\left(|g(X)-a_0|\leq \frac{\delta}{2} \text{ and }\frac{T_{\sigma}^{\#}\Phi}{\Phi}(X)\geq \sqrt{t} \right) \geq \frac{1}{4}p.$$
But then we can change variables by $T_{\sigma}$ to estimate
\begin{align*}
\mathbb{P}\left(\left|g(X)-a_0\right|\geq \frac{\delta}{2} \right)
& = \int_{\mathbb{R}} 1_{\left|g(x)-a_0\right|\geq \frac{\delta}{2}} \Phi(x) dx\\
& \geq \int_{\mathbb{R}} 1_{\left|g(x)-a_0\right|\leq \frac{\delta}{2}} \frac{T_{\sigma}^{\#}\Phi}{\Phi}(x) \Phi(x) dx\\
& \geq \frac{1}{4}\sqrt{t}p,
\end{align*}
which contradicts \eqref{eq:contradiction-assumption}!
\end{proof}

To apply this lemma, we define for any $\delta\in \mathbb{R}$ and $v\in \mathbb{R}^W$ the map
$T_{\delta,v}:\mathbb{R}^{W^2}\to \mathbb{R}^{W^2}$ by 
$$T_{\delta, v}\left(B\right) = B + \delta \ket{Bv}\bra{v}.$$
$T_{\delta,v}$ has the following basic properties. 
\begin{proposition}(Properties of $T_{\delta,v}$)\label{prop:properties-of-T}
For any $\delta\in \mathbb{R}$ and unit vector $v\in \mathbb{R}^W$, 
$T_{\delta,v}$ is a linear map such that 
\begin{enumerate}
\item for each unit vector $w\in \mathbb{R}^W$ $T_{\delta,v}$ preserves the linear subspace
$$\left\{B\in \mathbb{R}^{W^2}\ :\ Bv = \left\|Bv\right\|w\right\},$$
\item $\det\left(T_{\delta,v}\right) = \left(1 + \delta\right)^{W},$ and thus $T_{\delta,v}$ is a smooth diffeomorphism 
if $\delta \neq -1$. 
\end{enumerate}
\end{proposition}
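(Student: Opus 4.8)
The plan is to observe that $T_{\delta,v}$ is simply right multiplication by $I+\delta P$, where $P:=\ket{v}\bra{v}$ is the orthogonal projection onto $\mathrm{span}(v)$: since $\ket{Bv}\bra{v}=B\ket{v}\bra{v}=BP$, we have $T_{\delta,v}(B)=B(I+\delta P)$. Linearity in $B$ is then immediate, and the remaining two claims reduce to short computations with this formula.

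For part (1), I would evaluate on $v$. Because $Pv=\langle v,v\rangle v=v$,
$$T_{\delta,v}(B)v = Bv + \delta BPv = (1+\delta)\,Bv.$$
Hence, for $\delta>-1$ — the only regime relevant to the application, where $\delta=\pm 2W^{-1/2}$ — the vectors $Bv$ and $T_{\delta,v}(B)v$ are positive scalar multiples of one another. In particular, if $Bv=\|Bv\|\,w$ then $T_{\delta,v}(B)v=(1+\delta)\|Bv\|\,w=\|T_{\delta,v}(B)v\|\,w$, so the displayed set is mapped into itself; and, for every $\delta$, the genuine linear subspace $\{B:Bv\in\mathrm{span}(w)\}$ is preserved, since the identity above keeps $Bv$ in $\mathrm{span}(w)$.

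For part (2), I would exploit rotational symmetry: conjugating $T_{\delta,v}$ by an appropriate right-multiplication operator $B\mapsto BO$ with $O$ orthogonal shows that $\det(T_{\delta,v})$ does not depend on the unit vector $v$, so I may assume $v=e_1$. Then $I+\delta P=\mathrm{diag}(1+\delta,1,\dots,1)$, and $T_{\delta,v}(B)=B(I+\delta P)$ merely rescales the first column of $B$ by $1+\delta$ while fixing the other $W^2-W$ matrix entries. Thus $T_{\delta,v}$ is diagonal in the standard basis of $\mathbb{R}^{W\times W}$ with exactly $W$ diagonal entries equal to $1+\delta$, giving $\det(T_{\delta,v})=(1+\delta)^W$. (Equivalently, one can use that $B\mapsto BM$ has determinant $(\det M)^W$ together with the matrix determinant lemma $\det(I+\delta vv^{\ast})=1+\delta$.) Since $T_{\delta,v}$ is linear it is $C^\infty$, and it is a bijection precisely when this determinant is nonzero, i.e. when $\delta\neq -1$, so it is a smooth diffeomorphism in that range.

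There is no serious obstacle here; the only point meriting a word of care is that the set in (1) is a cone rather than a linear subspace, so ``preserves'' should be read as ``maps into itself,'' and this reading uses $1+\delta\geq 0$.
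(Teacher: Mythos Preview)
Your proof is correct and follows essentially the same route as the paper: the paper also declares (1) trivial and for (2) rotates to $v=e_1$ to see that $T_{\delta,v}$ multiplies the first column of $B$ by $1+\delta$. Your additional observations---that $T_{\delta,v}(B)=B(I+\delta\ket{v}\bra{v})$, and that the set in (1) is a cone rather than a linear subspace so that ``preserves'' requires $1+\delta\geq 0$---are valid refinements the paper glosses over.
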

    
\begin{proof}
$\left(1\right)$ is trivial. 
For $\left(2\right)$, note that by rotating we can assume $v=e_1$. But, then $T_{\delta,v}$ 
simply multiplies the first column of $B$ by $(1+\delta)$, implying the claim.
\end{proof}

    To apply Lemma \ref{lem:MW} we need to compute the distortion of the law of $\left(B_j\,\middle|\, \mathcal{F}\right)$ under $T_{\delta,v}$, 
    i.e. estimate the quantity $\frac{T_+^{\#}\Phi}{\Phi}(X) \frac{T_-^{\#}\Phi}{\Phi}(X)$ 
    in our setting.
    By Proposition \ref{prop:schenker-product}, see \eqref{eq:joint-density},
    the density of $\left(B_j \,\middle|\, \left(D_{i}\right)_{i\in \left[1,N\right]}\right)$ is given by
    \begin{equation}\label{eq:conditional-B-density}
    F_j\left(B\right) := \frac{1}{Z}\phi_{1,j+1}\left(D_{j+1} + E + B^{\ast}D_j^{-1}B\right)\phi_{2,j}\left(B\right),
    \end{equation}
    where $\phi_{1,j+1}$ and $\phi_{2,j}$ are the unconditional densities 
    of $A_{j+1}$ and $B_j$, and we have used $Z$ as a normalization constant whose value
    plays no role in the proof. 

    \begin{lemma}\label{lem:distortion-general}
        Let $M>0$ and $H$ be as in Theorem ~\ref{thm:general-main}. Then, for any $j\in \left[1,N-1\right]$, $|\delta|\leq 2$, 
        $E\in \mathbb{R}$, $B\in \mathbb{R}^{W^2}$ and unit vector $v\in \mathbb{R}^W$, we have
        \begin{align*}
          \Big |\log F_j\left(T_{-\delta,v}\left(B\right)\right) + \log F_j\left(T_{\delta,v}\left(B\right)\right)  
          & -  2\log  F_j\left(B\right)\Big|\\
          &  \lsim \delta^2 W \left(\left\|Bv\right\|^2 + \left\|f_{j+1}(A_{j+1})v\right\|^2 + \left\|B^{\ast}D_{j}^{-1}Bv\right\|^2\right).
        \end{align*}
    \end{lemma}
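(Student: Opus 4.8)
\textbf{Proof proposal for Lemma~\ref{lem:distortion-general}.}

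The plan is to Taylor expand the left-hand side as a function of $\delta$ and observe that the left-hand side is a symmetric second difference in $\delta$ around $\delta=0$, so it picks out (up to error terms) the second-order behavior, with no first-order contribution. Concretely, write $g(\delta) := \log F_j(T_{\delta,v}(B))$. The quantity we must bound is $g(-\delta) + g(\delta) - 2g(0)$, which by Taylor's theorem with integral remainder equals $\int_0^\delta (\delta - s)\big(g''(s) + g''(-s)\big)\,ds$, so it suffices to show $|g''(s)| \lsim W(\|Bv\|^2 + \|f_{j+1}(A_{j+1})v\|^2 + \|B^*D_j^{-1}Bv\|^2)$ uniformly for $|s|\leq 2$ (absorbing the $\delta^2$ from the $\int_0^\delta(\delta-s)\,ds$ prefactor). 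Using \eqref{eq:conditional-B-density}, $g(\delta)$ splits as a sum of three pieces: $\log \phi_{1,j+1}(D_{j+1} + E + B_\delta^* D_j^{-1} B_\delta)$ where $B_\delta := T_{\delta,v}(B)$, plus $\log\phi_{2,j}(B_\delta)$, plus the constant $-\log Z$. I would differentiate each piece twice in $\delta$ and bound the result.

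The key structural inputs are: (i) $B_\delta = B + \delta\ket{Bv}\bra{v}$ is \emph{affine} in $\delta$, so $\partial_\delta B_\delta = \ket{Bv}\bra{v}$ is a fixed rank-one matrix of operator norm $\|Bv\|$ and $\partial_\delta^2 B_\delta = 0$; (ii) $M$-regularity gives $\|\partial^2_{xx}\log\phi\|_\infty \leq M$ for the one-dimensional densities, which lifts (via the chain rule along line segments in $\mathbb{R}^{W\times W}_{\mathrm{Sym}}$, together with $\mathbb{E}X^2\leq 1$ bounding first derivatives of $\log\phi$ at $0$ after rescaling) to control of the Hessian of $A\mapsto \log\phi_{1,j+1}(A)$ in operator/Frobenius norm; and (iii) the function $f_{j+1} = \frac{1}{W}\nabla\log\phi_{1,j+1}$ from \eqref{def:f-function} is exactly the gradient appearing when we differentiate $\log\phi_{1,j+1}$ along the matrix path $\delta\mapsto D_{j+1}+E+B_\delta^* D_j^{-1} B_\delta$. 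For the $\phi_{2,j}$ piece: $\partial_\delta \log\phi_{2,j}(B_\delta) = \langle \nabla\log\phi_{2,j}(B_\delta), \ket{Bv}\bra{v}\rangle$ and $\partial_\delta^2\log\phi_{2,j}(B_\delta) = \langle \ket{Bv}\bra{v}, \mathrm{Hess}(\log\phi_{2,j})(B_\delta)[\ket{Bv}\bra{v}]\rangle$, which is $\lsim W\|Bv\|^2$ after using $M$-regularity on the rescaled density (the factor $W$ coming from the $\sqrt{W}$-normalization of $B_j$; note $\|\ket{Bv}\bra{v}\|_{F} = \|Bv\|$). For the $\phi_{1,j+1}$ piece, set $P_\delta := D_{j+1} + E + B_\delta^* D_j^{-1} B_\delta$, so $\partial_\delta P_\delta = R^* D_j^{-1} B_\delta + B_\delta^* D_j^{-1} R$ where $R := \ket{Bv}\bra{v}$, and $\partial_\delta^2 P_\delta = 2 R^* D_j^{-1} R$. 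Then $\partial_\delta^2 \log\phi_{1,j+1}(P_\delta)$ has two contributions by the chain rule: the Hessian term $\langle \partial_\delta P_\delta, \mathrm{Hess}(\log\phi_{1,j+1})(P_\delta)[\partial_\delta P_\delta]\rangle \lsim W\|\partial_\delta P_\delta\|_F^2$, and the gradient term $\langle \nabla\log\phi_{1,j+1}(P_\delta), \partial_\delta^2 P_\delta\rangle = W\langle f_{j+1}(P_\delta), 2R^* D_j^{-1} R\rangle$.

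The remaining task is to bound these in terms of the three stated norms. For $\partial_\delta^2 P_\delta = 2R^* D_j^{-1} R = 2\ket{v}\bra{Bv} D_j^{-1} \ket{Bv}\bra{v}$, the inner product with $f_{j+1}(P_\delta)$ is $2\bra{Bv}D_j^{-1}\ket{Bv}\cdot \bra{v} f_{j+1}(P_\delta)\ket{v}$, and I would estimate $|\bra{Bv}D_j^{-1}Bv\rangle| = |\langle Bv, B^* D_j^{-1} B v\rangle|/\ldots$ — more carefully, $|\bra{Bv} D_j^{-1} \ket{Bv}| \leq \|Bv\|\,\|D_j^{-1}Bv\|$... hmm, that is not obviously one of our three norms, so instead I would write $D_j^{-1}$ acting and pair to get $|\langle v, B^*D_j^{-1}Bv\rangle| \leq \|B^*D_j^{-1}Bv\|$ after noting $\bra{Bv}D_j^{-1}\ket{Bv} = \langle v, B^* D_j^{-1} B v\rangle$ only when... actually $\bra{Bv} D_j^{-1} \ket{Bv} = (Bv)^* D_j^{-1} (Bv) = v^* B^* D_j^{-1} B v = \langle v, B^* D_j^{-1} B v\rangle$ since $D_j$ is symmetric — good, so this is $\leq \|B^*D_j^{-1}Bv\|$. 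And $|\bra{v} f_{j+1}(P_\delta)\ket{v}| \leq \|f_{j+1}(P_\delta)v\|$. So I need $\|f_{j+1}(P_\delta)v\| \lsim t$-type control; here $P_\delta = D_{j+1}+E+B_\delta^*D_j^{-1}B_\delta$, which unwinding is $A_{j+1}$ up to the conditioning structure, so $\|f_{j+1}(P_\delta)v\|$ should reduce to $\|f_{j+1}(A_{j+1})v\|$ — I would verify that $A_{j+1} = P_\delta + (\text{terms}) $; actually from the definition of $D_{j+1}$ in Proposition~\ref{prop:schenker-product}, $D_{j+1} = A_{j+1} - E - B_j^* D_j^{-1} B_j$, so $D_{j+1} + E + B_j^* D_j^{-1} B_j = A_{j+1}$, i.e. $P_0 = A_{j+1}$ exactly, and for general $\delta$, $P_\delta = A_{j+1} + (B_\delta^* D_j^{-1} B_\delta - B_j^* D_j^{-1} B_j)$; the correction has Frobenius norm $\lsim \|Bv\|^2 \cdot \|D_j^{-1}B v\|$-ish bounded via $\|B^*D_j^{-1}Bv\|$-type quantities, and $\|f_{j+1}\|$ is Lipschitz-ish by $M$-regularity (the Hessian bound again), letting me replace $\|f_{j+1}(P_\delta)v\|$ by $\|f_{j+1}(A_{j+1})v\|$ plus lower-order terms. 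Similarly $\|\partial_\delta P_\delta\|_F = \|R^* D_j^{-1} B_\delta + B_\delta^* D_j^{-1} R\|_F \lsim \|Bv\|\,\|D_j^{-1} B_\delta v\|$... and $D_j^{-1} B_\delta v = D_j^{-1}(Bv + \delta\|v\|^2 Bv) = (1+\delta) D_j^{-1} Bv$, so this is $\lsim \|Bv\|\,\|D_j^{-1}Bv\|$; pairing, $\|R^* D_j^{-1} B_\delta\|_F^2 = \|Bv\|^2 \|D_j^{-1}B_\delta v\|^2 \lsim \|Bv\|^2 \cdot |\langle D_j^{-1}Bv, Bv\rangle| \cdot \|D_j^{-1}\|$? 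No — I want to avoid $\|D_j^{-1}\|$. Instead note $\|D_j^{-1}Bv\|$ is not in our list, but $\|R^* D_j^{-1} B_\delta\|_F = \|Bv\| \cdot \|B_\delta^* D_j^{-1} Bv\| / \ldots$; hmm — actually $R^* D_j^{-1} B_\delta = \ket{v}\bra{Bv} D_j^{-1} B_\delta = \ket{v} (B_\delta^* D_j^{-1} Bv)^* $ wait let me recompute: $R = \ket{Bv}\bra{v}$ so $R^* = \ket{v}\bra{Bv}$, thus $R^* D_j^{-1} B_\delta = \ket{v}\bra{Bv}D_j^{-1}B_\delta = \ket{v}\big(B_\delta^* D_j^{-1} Bv\big)^*$ and its Frobenius norm is $\|B_\delta^* D_j^{-1} Bv\| = (1+\delta)\|B^* D_j^{-1} Bv\| \lsim \|B^*D_j^{-1}Bv\|$ — exactly one of our three norms; the symmetric term $B_\delta^* D_j^{-1} R = B_\delta^* D_j^{-1}\ket{Bv}\bra{v} = \big(B_\delta^* D_j^{-1} Bv\big)\bra{v}$, Frobenius norm $(1+\delta)\|B^*D_j^{-1}Bv\|$ as well.

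\textbf{Main obstacle.} The bookkeeping itself is routine; the one genuinely delicate point — flagged in the paper's own sketch as subtle — is controlling everything \emph{without} invoking $\|D_j^{-1}\|$, which is typically of order $W$. The proof must be organized so that every occurrence of $D_j^{-1}$ appears sandwiched as $B^* D_j^{-1} B$ applied to $v$, i.e. always in the combination $B_\delta^* D_j^{-1} Bv$ (which, being affine in $\delta$ and equal to $(1+\delta)B^*D_j^{-1}Bv$, is uniformly controlled by $\|B^*D_j^{-1}Bv\|$ for $|\delta|\leq 2$), never as $D_j^{-1}$ alone or as $D_j^{-1}Bv$. This is exactly why the three norms $\|Bv\|$, $\|f_{j+1}(A_{j+1})v\|$, $\|B^*D_j^{-1}Bv\|$ — and not others — appear on the right-hand side: each term in $g''(s)$ must be funneled into precisely these. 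The second, more technical obstacle is lifting the scalar $M$-regularity bound $\|\partial^2_{xx}\log\phi\|_\infty\leq M$ to a Frobenius-norm Hessian bound for $A\mapsto\log\phi_{1,j+1}(A)$ and $B\mapsto\log\phi_{2,j}(B)$ on the $W\times W$ (resp. $W^2$) matrix space — this requires that the multivariate density factorizes over rescaled scalar entries, so the Hessian is diagonal in the entry basis with entries $\lsim M$ after accounting for the $\sqrt W$ normalization, producing the overall factor $W$; one also needs $\mathbb{E}X^2\leq 1$ (equivalently a bound on $\mathbb{E}|\nabla\log\phi|$-type quantities, i.e. on $f_{j+1}$ at the relevant point) to handle the gradient term. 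Once these two points are set up cleanly, assembling $g(-\delta)+g(\delta)-2g(0) = \int_0^\delta(\delta-s)(g''(s)+g''(-s))\,ds$ and collecting the bounds gives the claim.
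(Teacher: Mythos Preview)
Your approach is correct and matches the paper's in all the essential ideas: splitting into the $\phi_{2,j}$ and $\phi_{1,j+1}$ contributions, second-order cancellation from the Hessian bound on $\log\phi$, the gradient term producing $f_{j+1}$, and the crucial point that $D_j^{-1}$ only ever appears sandwiched as $B^{\ast}D_j^{-1}Bv$. The one mechanical difference is that the paper works directly with the discrete symmetric difference rather than via $\int_0^\delta(\delta-s)g''(\pm s)\,ds$: it observes that $A(T_{\pm\delta,v}(B))$ are symmetric about the shifted center $A(B)+\delta^2\langle v,B^{\ast}D_j^{-1}Bv\rangle\ket{v}\bra{v}$, applies the second-order cancellation \eqref{eq:second-order-cancellation} there, and then handles the $O(\delta^2)$ shift by a single first-order Taylor step at $A(B)$. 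This means $f_{j+1}$ is only evaluated at $A_{j+1}=A(B)$ itself, so no Lipschitz step on $f_{j+1}$ is needed. Your continuous-Taylor route works as well (the entrywise form \eqref{eq:f_j-in-coordinates} does give $f_{j+1}$ Lipschitz constant $\lsim 1$), but the paper's shortcut is a bit cleaner. One small correction: your claimed equality $\|B_\delta^{\ast}D_j^{-1}Bv\|=(1+\delta)\|B^{\ast}D_j^{-1}Bv\|$ is not literally true since $B_\delta^{\ast}D_j^{-1}Bv = B^{\ast}D_j^{-1}Bv + \delta\langle v,B^{\ast}D_j^{-1}Bv\rangle v$, but the inequality $\leq (1+|\delta|)\|B^{\ast}D_j^{-1}Bv\|$ holds, which is all you need.
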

    
    \begin{proof}
        We use the form of $F_j$ given in \eqref{eq:conditional-B-density}, 
        and first estimate the 
        contribution of $\phi_{2,j}$. If $X$ is a random variable with density $\phi(x)$,
        and $\sqrt{W}X$ is $M$-regular, then by \eqref{eq:general-distribution-condition}, we have 
        \begin{equation}\label{eq:bounded-second-derivative}
        \left|\log\frac{\phi(x+y)\phi(x-y)}{\phi(x)^2}\right| \lsim W\left|y\right|^2,
        \end{equation} 
        for any $x,y\in \mathbb{R}$. Thus, the distribution of each entry of $B_j$ satisfies \eqref{eq:bounded-second-derivative}. Since the entries of $B_j$ are independent this implies
        \begin{equation}\label{eq:second-order-cancellation}
        \left| \log\frac{\phi_{2,j}\left(B+P\right)\phi_{2,j}\left(B-P\right)}{\phi_{2,j}\left(B\right)^2} \right|\lsim W\left\|P\right\|_{F}^2,
        \end{equation}
        for any $P,B\in \mathbb{R}^{W\times W}$.
        Taking $P = \delta \ket{Bv}\bra{v}$ gives 
              $$\big|\log \phi_{2,j}\left(T_{-\delta,v}\left(B\right)\right) + \log \phi_{2,j}\left(T_{\delta,v}\left(B\right)\right) - 2\log \phi_{2,j}\left(B\right)\big|\lsim \delta^2 W\left\|Bv\right\|^2.$$
        
        To estimate the change in $\phi_{1,j+1}$, we define for any $B\in \mathbb{R}^{W\times W}$ 
        $$A(B) := D_{j+1} + E + B^{\ast}D_j^{-1}B.$$
        By the same argument as for $\phi_{2,j}$, 
        \eqref{eq:second-order-cancellation} holds for $\phi_{1,j+1}$ when $B$ and $P$ are symmetric. Thus
        \begin{align*}
            \Big |\log \phi_{1,j+1}\left(A\left(T_{-\delta,v}\left(B\right)\right)\right) + \log \phi_{1,j+1}& \left(A\left(T_{\delta,v}\left(B\right)\right)\right) \\
            & - 2\log \phi_{1,j+1}\left(A\left(B\right) + \delta^2\langle B^{\ast}D_{j}^{-1}Bv,v\rangle \ket{v}\bra{v}\right)\Big|\\
            & \lsim \delta^2W \left\|\ket{v}\bra{Bv}D_{j}^{-1}B + B^{\ast}D_{j}^{-1}\ket{Bv}\bra{v}\right\|_{F}^2\\
            & \lsim \delta^2 W\left\| B^{\ast}D_j^{-1}Bv\right\|^2.
        \end{align*}
        To finish we need to estimate
        $$\log\phi_{1,j+1}\left(A(B) + \delta^2\langle B^{\ast}D_{j}^{-1}Bv,v\rangle \ket{v}\bra{v}\right)- \log \phi_{1,j+1}\left(A(B)\right).$$
        For this we Taylor expand $\log \phi_{1,j+1}$ around $A\left(B\right)$.
        Recalling the definition of $f_{j+1}$ in \eqref{def:f-function}
        and that the entries of $\sqrt{W}A_{j+1}$ are $M$-regular and independent up to symmetry 
        we have
        $$\left|\log \phi_{1,j+1}(A+P) - \log \phi_{1,j+1}(A) - W\Tr\left(f_{j+1}(A)P\right)\right| \lsim W\|P\|_{F}^2,$$
        for any symmetric $A,P\in \mathbb{R}^{W\times W}_{Sym}$.
        Taking $A= A(B)$ and $P = \delta^2\langle B^{\ast}D_{j}^{-1}Bv,v\rangle \ket{v}\bra{v}$ gives
        \begin{align*}
            | \log \phi_{1,j+1}\big(A\left(B\right) & + \delta^2\langle B^{\ast}D_j^{-1}Bv,v\rangle\ket{v}\bra{v}\big)  
             -  \log \phi_{1,j+1}\left(A\left(B\right)\right) |\\ 
            & \lsim W\delta^2\left|\langle B^{\ast}D_{j}^{-1}Bv,v\rangle\right|\big|\Tr\left(f_{j+1}\left(A\left(B\right)\right)\ket{v}\bra{v}\right)\big| + W \delta^4\left|\langle B^{\ast}D_{j}^{-1}Bv,v\rangle\right|^2\\
            & \lsim W\delta^2 \left(\left\|f_{j+1}(A\left(B\right))v\right\|^2 + \left\|B^{\ast}D_{j}^{-1}Bv\right\|^2\right).
            \end{align*}
        To pass to the third line we used that $\left|\Tr\left(A\ket{v}\bra{v}\right)\right| \leq \left\|Av\right\|,$ and $|\delta|\leq 2$.
        Combining the above estimates gives
        \begin{align*}
        \big |\log \phi_{1,j+1}(A(T_{-\delta,v}\left(B\right))) +&  \log \phi_{1,j+1}(A(T_{\delta,v}\left(B\right)))  - 2\log \phi_{1,j+1}(A\left(B\right))\big|\\
        & \lsim \delta^2 W\left(\left\| B^{\ast}D_j^{-1}Bv\right\|^2 + \left\|Bv\right\|^2 + \left\|f_{j+1}(A\left(B\right))v\right\|^2\right).
        \end{align*}
        The lemma follows since, by definition,  $A\left(B\right) = A_{j+1}$. 
        \end{proof}
        
        Lemma \ref{lem:single-step-variance-general} now follows for $W\neq 4$
        by applying Lemma \ref{lem:MW} with $\delta = 2W^{-1/2}$,
        $X = \left(B_j\ | \mathcal{F}\right)\in \mathbb{R}^{W^2}$, 
        $g(B) = \log \left\|Bv_j\right\|,$ and
        $T_{\pm} = T_{\pm \delta, v_j}$. When $W=4$, we do the same but with $\delta = 3W^{-1/2}$
        so that $\text{det}\left(T_{-\delta, v_j}\right)\neq 0$. Indeed, since 
        $F_j$ is the density of $\left(B_j \, \middle|\, \left(D_i\right)_{i\in \left[1,N\right]}\right)$,
        the density of $\left(B_j \,\middle|\, \mathcal{F}\right)$, which is the same as  
        $\left(B_j\,\middle|\, D_j, D_{j+1}, v_j, v_{j-1}\right)$, is given by 
        $$\Phi(B) := \frac{1}{Z}\left\|Bv_j\right\|^{W-1}F_j(B),$$
        restricted to the $W^2-W+1$ dimensional space 
        \[\left\{B\in \mathbb{R}^{W\times W}\ : Bv_j = \frac{\left\|Bv_j\right\|}{\left\|D_jv_{j-1}\right\|}D_{j}v_{j-1}\right\}.\]
        By Proposition \ref{prop:properties-of-T}, $T_{\pm}$ are smooth diffeomorphisms preserving that subspace, 
        and by Lemma \ref{lem:distortion-general}, we have
        \begin{align*}
        \frac{T_+^{\#}\Phi}{\Phi}(B) \frac{T_-^{\#}\Phi}{\Phi}(B)
        & = \left(1+\frac{2}{\sqrt{W}}\right)^{2W-1} \left(1-\frac{2}{\sqrt{W}}\right)^{2W-1} \frac{F_j(T_{\delta,v_j}\left(B\right))F_{j}\left(T_{-\delta,v_j}\left(B\right)\right)}{F_j\left(B\right)^2}\\
        & \gsim e^{-C\left(\left\|Bv_j\right\|^2 + \left\|f_{j+1}(A_{j+1})v_j\right\|^2 + \left\|B^{\ast}D_{j}^{-1}Bv_j\right\|^2\right)}.
        \end{align*}
        Thus, Lemma \ref{lem:MW} implies
        $$\sup_{a\in \mathbb{R}}\mathbb{P}\left(\left| \alpha_j -a\right| \leq \frac{1}{\sqrt{W}}  \,\middle| \, \mathcal{F}\right) 
        \leq 1 - e^{-Ct}\mathbb{P}\left(\left\|B_jv_j\right\|, \left\|f_{j+1}(A_{j+1})v_j\right\|, \left\|B_j^{\ast}D_j^{-1}B_jv_j\right\| \leq t \,\middle| \,\mathcal{F}\right),$$
        for any $t\geq 1$. Note we can remove the $c$ in Lemma \ref{lem:MW} 
        by increasing $C$ and using $t\geq 1$.

\section{Proof of Theorem \ref{thm:single-energy-decay}}

Fix a unit vector $w\in \mathbb{R}^{W}$ and energy $E\in [-E_0,E_0]$. 
Recall that, by Proposition \ref{prop:schenker-product} and the definition of 
the $\alpha_j$, 
\begin{equation}
  \log \left\|G\left(1,N\right)w\right\| = \sum_{i=1}^{N} \alpha_i,
\end{equation}
and
the $\alpha_j$ are all independent random variables conditioned 
on $\mathcal{F}= \sigma\left(\left(D_i,v_i\right)_{i\in [1,N]}\right)$. Hence, 
\begin{equation}\label{eq:product-form}
\begin{aligned}
\left(\mathbb{E}\left\|G(1,N)w\right\|^q\right)^2 
& =\left(\mathbb{E} e^{q\sum_{i=1}^{N}\alpha_i}\right)^2\\ 
& \leq \mathbb{E}\left(\mathbb{E} \left[e^{q\sum_{i=1}^{N}\alpha_i} \,\middle| \, \mathcal{F}\right]\right)^2\\
& = \mathbb{E}\left(\prod_{i=1}^{N}\mathbb{E}\left[e^{q\alpha_i} \,\middle| \, \mathcal{F}\right]^2\right). 
\end{aligned}
\end{equation}
To estimate the RHS above, 
we use the following elementary lemma, whose proof is given in the appendix.
It is a simpler version of Proposition 3 in \cite{Schenker-2009}.
\begin{lemma}\label{lem:elementary-jensens}
    For any $p,\delta\in [0,1]$
    and random variable $X$ satisfying
    \begin{equation}\label{eq:elementary-jensens-assum}
    \sup_{a\in \mathbb{R}}\mathbb{P}\left(\left|X-a\right|\leq \delta\right)\leq 1-p,
    \end{equation}
    and $\mathbb{E}e^{X}<\infty$ we have
    $$\left(\mathbb{E} e^{X}\right)^2 \leq e^{-cp\delta^2}\mathbb{E}e^{2X}.$$
\end{lemma}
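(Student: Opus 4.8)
The plan is to reduce the statement to a one-dimensional convexity estimate. Write $Y := X - a$ for the optimal (or near-optimal) centering $a$, so that the hypothesis \eqref{eq:elementary-jensens-assum} reads $\mathbb{P}(|Y| > \delta) \geq p$, and note that the claimed inequality $(\mathbb{E}e^X)^2 \leq e^{-cp\delta^2}\mathbb{E}e^{2X}$ is unchanged under the shift $X \mapsto X - a$ (both sides scale by $e^{2a}$). So it suffices to prove: if $\mathbb{P}(|Y| > \delta) \geq p$ then $(\mathbb{E}e^Y)^2 \leq e^{-cp\delta^2}\,\mathbb{E}e^{2Y}$. Equivalently, setting $R := \mathbb{E}e^{2Y}/(\mathbb{E}e^Y)^2 \geq 1$ (Cauchy--Schwarz), we must show $R \geq e^{cp\delta^2}$, i.e. $\log R \gsim p\delta^2$.

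The key step is to quantify the gap in Cauchy--Schwarz (equivalently Jensen) coming from the fact that $Y$ is not concentrated. Introduce the tilted probability measure $d\tilde{\mathbb{P}} := e^Y\,d\mathbb{P}/\mathbb{E}e^Y$; then $\log R = \log \mathbb{E}_{\tilde{\mathbb{P}}}[e^Y] - \mathbb{E}... $ hmm, more cleanly: $\log R = \log \mathbb{E}_{\tilde{\mathbb{P}}} e^{Y} - 2\log\mathbb{E} e^Y + \log \mathbb{E} e^Y$... I will instead use the cleaner route via the cumulant generating function. Let $\Lambda(s) := \log \mathbb{E} e^{sY}$; then $\log R = \Lambda(2) - 2\Lambda(1)$, and since $\Lambda$ is convex with $\Lambda(0) = 0$, Taylor's theorem gives $\Lambda(2) - 2\Lambda(1) = \int_0^1 (1-s)\,\Lambda''(1+s)\,ds$ (or a similar exact remainder formula), so it suffices to lower bound $\Lambda''(s) = \operatorname{Var}_{\mathbb{P}_s}(Y)$ on $s \in [1,2]$, where $\mathbb{P}_s$ is the exponential tilt by $e^{sY}$. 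The non-concentration hypothesis must be transferred to these tilted measures: since $e^{sY} \leq e^{2|Y|}$ is comparable across the bounded range of the relevant part of $Y$ — here one must be slightly careful — one shows $\mathbb{P}_s(|Y| > \delta) \gsim p$ still holds, perhaps after truncating $Y$ to $|Y| \leq 1$ (legitimate since we may split the event $\{|Y| > \delta\}$ into $\{\delta < |Y| \leq 1\}$ and $\{|Y| > 1\}$, and on the latter $e^{2Y}$ is already enormous so $R$ is large directly). Given $\mathbb{P}_s(|Y| > \delta) \gsim p$ together with $\delta \leq 1$, an elementary argument shows $\operatorname{Var}_{\mathbb{P}_s}(Y) \gsim p\delta^2$: indeed any random variable taking values $> \delta$ in absolute value with probability $\gsim p$ has variance $\gsim p\delta^2$ as long as it also is not too spread — one can argue by comparing to the best constant approximation, $\operatorname{Var}_{\mathbb{P}_s}(Y) = \inf_b \mathbb{E}_{\mathbb{P}_s}(Y-b)^2 \geq \inf_b \mathbb{E}_{\mathbb{P}_s}[(Y-b)^2 \mathbf{1}_{|Y|\leq 2}]$, and on a bounded window a mass-$p$ event at distance $>\delta$ from any fixed $b$ forces this to be $\gsim p\delta^2$ — and again the unbounded part only helps.

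The main obstacle I expect is the transfer of the non-concentration bound \eqref{eq:elementary-jensens-assum} from $\mathbb{P}$ to the exponentially tilted measures $\mathbb{P}_s$, since tilting by $e^{sY}$ can in principle move mass away from the window $|Y| > \delta$ toward large $Y$; the fix is the dichotomy above (either a constant fraction of the $p$-mass sits in a bounded window, where tilting changes probabilities only by a bounded factor, or a constant fraction sits at $|Y| \gtrsim 1$, in which case $\mathbb{E}e^{2Y}$ is large enough on its own). Everything else is routine: the shift-invariance reduction, the exact Taylor remainder for $\Lambda(2) - 2\Lambda(1)$, and the one-dimensional variance-lower-bound from a mass-$p$ deviation. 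Since the excerpt only needs a crude constant $c$, I would not optimize any of these bounds, and I would present the argument in the form: reduce to $a = 0$, dispose of the $\{|Y| > 1\}$ case, then run the $\Lambda'' = \operatorname{Var}$ Taylor argument on the truncated variable.
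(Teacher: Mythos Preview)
Your cumulant-generating-function route is a sound framework in principle, but the step you yourself flag as the main obstacle is a genuine gap as written. You assert that on the window $\{|Y|\leq 1\}$ tilting by $e^{sY}$ for $s\in[0,2]$ changes probabilities only by a bounded factor; this is false, because $\mathbb{P}_s(A)=\mathbb{E}[e^{sY}\mathbf 1_A]/\mathbb{E}e^{sY}$ carries the normalizing constant $\mathbb{E}e^{sY}$ in the denominator, and that can be arbitrarily large if $Y$ has a heavy upper tail, even when $A\subset\{|Y|\leq 1\}$. The truncation you propose does not rescue this: clipping $Y$ to $[-1,1]$ creates atoms near $\pm1$ and destroys the hypothesis \eqref{eq:elementary-jensens-assum} for \emph{all} $a$, which is exactly what you need to lower bound $\operatorname{Var}_{\mathbb P_s}(Y)=\inf_b\mathbb E_{\mathbb P_s}(Y-b)^2$. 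Your fallback case ``$\mathbb{P}(|Y|>1)\gsim p$ so $\mathbb Ee^{2Y}$ is already large'' is also not argued when the mass sits at $Y\ll -1$. None of this is unfixable, but it requires real work, and you have not indicated how to do it.

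The paper sidesteps all of it in three lines. Normalize so that $\mathbb{E}e^X=1$; observe that for small $c$ and $\delta\leq 1$ the event $\{|e^X-1|<c\delta\}$ is contained in some interval $\{|X-a|\leq\delta\}$, so the hypothesis gives $\mathbb P(|e^X-1|\geq c\delta)\geq p$; conclude
\[
\mathbb Ee^{2X}=1+\mathbb E(e^X-1)^2\geq 1+c^2p\delta^2\geq e^{c'p\delta^2}.
\]
The point is that bounding $\operatorname{Var}(e^X)$ under the \emph{original} measure requires the hypothesis only at a \emph{single} centering $a$, so no tilting and no transfer are needed. Your $\Lambda''$ approach is the natural one for sharper or more general inequalities, but here it is a significant detour.
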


To apply this to \eqref{eq:product-form}, we let $t_0\geq 1$ 
be a constant to be chosen, and define the $\mathcal{F}$-measurable random variables
$$p_j := \mathbb{P}\left(\left\|B_jv_j\right\|, \left\|B_j^{\ast}D_{j}^{-1}B_jv_j\right\|, \left\|f_{j+1}\left(A_{j+1}\right)v_j\right\|\leq t_0 \,\middle| \, \mathcal{F}\right),$$
for all $j\in [1,N-1]$ so that Lemma \ref{lem:single-step-variance-general} implies
$$\sup_{a\in \mathbb{R}}\mathbb{P}\left(\left|\alpha_j-a\right|\leq W^{-1/2}\, \middle|\, \mathcal{F}\right)
\leq 1-e^{-Ct_0}p_j.$$
Hence applying Lemma \ref{lem:elementary-jensens} 
to each factor in \eqref{eq:product-form} with $X = q\alpha_j$ implies that
for any $q\in (0,1/5)$ 
\begin{equation}\label{eq:almost-done-version-2}
\begin{aligned}
\left(\mathbb{E}\left\|G(1,N)w\right\|^{q}\right)^{2} 
& \leq \mathbb{E}\left(e^{-cq^2W^{-1}\sum_{i=1}^{N-1}p_i}\mathbb{E}\left[e^{2q\sum_{i=1}^{N}\alpha_i}  \,\middle| \, \mathcal{F}\right]\right)\\
& \leq \left(\mathbb{E}e^{-cq^2W^{-1}\sum_{i=1}^{N-1}p_i}\right)^{1/2}\left(\mathbb{E}e^{4q\sum_{i=1}^{N}\alpha_i}\right)^{1/2}\\
& = \left(\mathbb{E}e^{-cq^2W^{-1}\sum_{i=1}^{N-1}p_i}\right)^{1/2} \left(\mathbb{E}\left\|G(1,N)\right\|^{4q}\right)^{1/2}\\
& \leq W^{Cq}\left(\mathbb{E}e^{-cq^2W^{-1}\sum_{i=1}^{N-1}p_i}\right)^{1/2},
\end{aligned}
\end{equation}
where $C,c$ may depend on $t_0$. 
We applied Cauchy-Schwarz and Jensen's inequality to pass 
to the second line and the Wegner estimate 
(Proposition \ref{prop:wegner}) to pass to the last line, 
using that $4q\leq 4/5$. 

Thus, the theorem follows if for some $t_0\geq 1$, depending only on $M$ and $E_0$, we can show
\begin{equation}\label{eq:whats-left}
\mathbb{P}\left(\sum_{i=1}^{N-1}p_i\geq cN\right) \geq 1-e^{-cN}.
\end{equation}
 To do this we define for each $j\in [1,N-1]$
$$X_j :=\max \left(\left\|B_jv_j\right\|, \left\|f_{j+1}\left(A_{j+1}\right)v_j\right\|, \left\|B_j^{\ast}D_j^{-1}B_jv_j\right\|\right),$$
so that Lemma \ref{lem:single-step-variance-general} implies
\begin{equation}\label{eq:lower-bound-sum-of-ps}
\sum_{i=1}^{N-1}p_i =  \sum_{i=1}^{N-1} \mathbb{P}\left(X_i\leq t_0\, \middle|\,\mathcal{F}\right) = \mathbb{E}\left[\sum_{i=1}^{N-1}1_{X_i\leq t_0}\,\middle|\, \mathcal{F}\right].
\end{equation}
To lower bound the RHS  we need to bound each  
quantity in $X_j$, uniformly in $W$. 

First we estimate $\left\|B_j^{\ast}D_j^{-1}B_j v_j\right\|$. 
Note $\left\|D_{j}^{-1}\right\|$ is typically order $W$, 
and so $\left\|B_jD_{j}^{-1}B_jv_j\right\|\lsim 1$
can only hold due to correlations between 
the $v_j, D_j$ and $B_j$ which force $v_j$ away from 
the most expanding directions of $B_{j}^{\ast}D_{j}^{-1}B_j$. 
This is captured by the following claim. 
\begin{claim}\label{claim:either-or}
    If $j\in [1,N-1]$ and $\left\|A_j\right\|_{op}, \left\|B_j\right\|_{op}\leq 10$, then 
    either  
    $$\left\|B_j^{\ast}D_{j}^{-1}B_jv_j\right\|\leq 100,$$
    or 
    $$\left\|B_{j-1}^{\ast}D_{j-1}^{-1}B_{j-1}v_{j-1}\right\| \leq 100 + \left|E\right|.$$
\end{claim}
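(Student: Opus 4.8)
The plan is to trade a large value of $\|B_j^{*}D_j^{-1}B_jv_j\|$ against a small value of $\|D_jv_{j-1}\|$, and then read off the second alternative directly from the Schenker recursion $D_j = A_j - E - B_{j-1}^{*}D_{j-1}^{-1}B_{j-1}$. The first step is to record the identity linking consecutive $v$'s. Directly from the definition \eqref{def:v_j} one has
\[
v_{j-1} = \frac{D_j^{-1}B_jv_j}{\left\|D_j^{-1}B_jv_j\right\|},
\]
so, writing $\beta_j := \|D_j^{-1}B_jv_j\|$, we get $D_j^{-1}B_jv_j = \beta_j v_{j-1}$, and applying $D_j$,
\[
B_jv_j = \beta_j\, D_jv_{j-1}.
\]
Here $\beta_j>0$ automatically whenever $v_{j-1}$ is well defined; and if $B_jv_j = 0$ the first alternative is trivially true, so we may assume $\beta_j>0$.

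Next I would observe that the left-hand sides of both alternatives are governed by $\beta_j$. Since $v_j$ and $v_{j-1}$ are unit vectors and $\|B_j\|_{op}=\|B_j^{*}\|_{op}\le 10$, we have $\|B_jv_j\|\le 10$ and $\|B_j^{*}v_{j-1}\|\le 10$. Applying $B_j^{*}$ to $D_j^{-1}B_jv_j = \beta_j v_{j-1}$ gives $B_j^{*}D_j^{-1}B_jv_j = \beta_j B_j^{*}v_{j-1}$, hence
\[
\left\|B_j^{*}D_j^{-1}B_jv_j\right\| = \beta_j\left\|B_j^{*}v_{j-1}\right\| \le 10\,\beta_j .
\]
So if $\beta_j\le 10$ the first alternative holds and we are done; thus we may assume $\beta_j>10$.

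Under $\beta_j>10$, the identity $B_jv_j = \beta_jD_jv_{j-1}$ together with $\|B_jv_j\|\le 10$ forces $\|D_jv_{j-1}\| = \|B_jv_j\|/\beta_j \le 1$. For $j\ge 2$ the Schenker recursion of Proposition \ref{prop:schenker-product} gives $D_j = A_j - E - B_{j-1}^{*}D_{j-1}^{-1}B_{j-1}$, whence
\[
B_{j-1}^{*}D_{j-1}^{-1}B_{j-1}v_{j-1} = A_jv_{j-1} - Ev_{j-1} - D_jv_{j-1},
\]
and therefore
\[
\left\|B_{j-1}^{*}D_{j-1}^{-1}B_{j-1}v_{j-1}\right\| \le \|A_j\|_{op} + |E| + \|D_jv_{j-1}\| \le 10 + |E| + 1 \le 100 + |E|,
\]
which is the second alternative. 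The boundary index $j=1$ is special, since $D_1 = A_1-E$ has no $B_{j-1}$-term; but it contributes only a single term to the sum $\sum_i 1_{X_i\le t_0}$ in \eqref{eq:lower-bound-sum-of-ps}, so it is harmless and can simply be discarded there.

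I do not anticipate a real obstacle: the whole argument is the chain of implications ``$\|B_j^{*}D_j^{-1}B_jv_j\|$ large $\Rightarrow \beta_j$ large $\Rightarrow \|D_jv_{j-1}\|$ small $\Rightarrow \|B_{j-1}^{*}D_{j-1}^{-1}B_{j-1}v_{j-1}\|$ close to $\|(A_j-E)v_{j-1}\|$, hence $O(1+|E|)$''. The only points deserving a line of care are that $D_j^{-1}$, $v_{j-1}$ and $\beta_j$ are all well defined on the full-measure event where the Schenker decomposition is valid, and the harmless $j=1$ boundary case noted above.
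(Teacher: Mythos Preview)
Your argument is correct and follows the same route as the paper's proof: both show that failure of the first alternative forces $\|D_j^{-1}B_jv_j\|$ to be large, identify $v_{j-1}$ as its normalization so that $\|D_jv_{j-1}\|$ is small, and then read off the second alternative from the recursion $D_j = A_j - E - B_{j-1}^{*}D_{j-1}^{-1}B_{j-1}$ via the triangle inequality. Your explicit handling of the $j=1$ boundary is a helpful addition, since the paper's statement includes it but the claim is in fact only applied for $j\ge 2$.
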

\begin{proof}
    If $\left\|B_j^{\ast}D_{j}^{-1}B_jv_j\right\|\leq 100$ we are done, so suppose not. 
    In this case $v_{j}$ must be expanded under $D_j^{-1}B_j$. 
    Indeed, since $\left\|B_j\right\|_{op}\leq 10$ we must have
    $$\left\|D_j^{-1}B_j v_{j}\right\| \geq 10.$$
    But then by the definition of $D_j$ and $v_{j-1}$ we have 
    \begin{align*}
    \left\|B_{j-1}^{\ast}D_{j-1}^{-1}B_{j-1}v_{j-1}\right\|
    & = \left\|A_jv_{j-1} - Ev_{j-1} -D_{j}v_{j-1}\right\|\\
    & \leq 10 + \left|E\right| + \left\|D_{j}\frac{D_{j}^{-1}B_jv_{j}}{\left\|D_j^{-1}B_jv_{j}\right\|}\right\|\\
    & \leq 11 + \left|E\right|
\end{align*}
which implies the claim.
\end{proof}

To estimate $\|f_{j+1}(A_{j+1})v_j\|$, $\|B_jv_j\|$ and $\left\|A_j\right\|$ 
we use the following version of the Bai-Yin theorem that follows directly from 
Theorem 5.9 in
\cite{bai-silverstein}. For a simpler proof of a 
slightly weaker but sufficient estimate see \cite{Latala}.

\begin{proposition}\label{prop:bai-yin}
Let $\lambda,\epsilon>0$. If $A\in \mathbb{R}^{n\times n}$ is a random matrix with independent entries, 
or is symmetric with independent entries above the diagonal and 
$\mathbb{E}A_{ik} = 0$, $\mathbb{E}\left|A_{ik}\right|^2\leq 1$ and $\mathbb{E}\left|A_{ik}\right|^4 \leq \lambda$ 
for all $x,y\in \left[1,n\right]$ then 
$$\mathbb{P}\left(\left\|A\right\|_{op}\geq (2+\epsilon)\sqrt{n}\right)\leq \epsilon,$$
for $n$ sufficiently large, depending on $\lambda$ and $\epsilon$. 
\end{proposition}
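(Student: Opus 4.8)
The plan is to prove this the classical way, by truncation plus the moment (trace) method of Bai--Yin; it is a standard input (hence the references to \cite{bai-silverstein,Latala}), but here is the route I would take. First reduce to the symmetric case: if $A$ has all entries independent, replace it by the $2n\times 2n$ symmetric matrix $\begin{pmatrix} 0 & A\\ A^{*} & 0\end{pmatrix}$, which has the same operator norm and whose nonzero entries are independent on and above the diagonal and satisfy the same moment bounds. So assume $A=A^{*}$ with independent entries on and above the diagonal, $\mathbb{E}A_{ik}=0$, $\mathbb{E}A_{ik}^{2}\leq 1$, $\mathbb{E}A_{ik}^{4}\leq\lambda$; the target $2\sqrt n$ is the edge of the semicircle law, consistent with the variance bound $\mathbb{E}A_{ik}^{2}\le 1$. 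Fix a slowly vanishing sequence $\delta_{n}\to 0$ with $\delta_{n}\sqrt n\to\infty$ (to be tuned), and split $A=\widehat A+\check A$ where $\widehat A_{ik}:=A_{ik}\mathbf 1_{|A_{ik}|\leq\delta_{n}\sqrt n}-\mathbb{E}[A_{ik}\mathbf 1_{|A_{ik}|\leq\delta_{n}\sqrt n}]$ is the centered truncation (bounded by $2\delta_{n}\sqrt n$, still mean zero, variance still $\le 1$). The recentering shift has entries bounded by $\mathbb{E}[|A_{ik}|\mathbf 1_{|A_{ik}|>\delta_{n}\sqrt n}]\leq\lambda(\delta_{n}\sqrt n)^{-3}$, hence is a matrix of operator norm $\lsim \lambda\delta_{n}^{-3}n^{-1/2}\to 0$ and may be discarded; so it remains to bound $\|\widehat A\|_{op}$ and the heavy part $R_{ik}:=A_{ik}\mathbf 1_{|A_{ik}|>\delta_{n}\sqrt n}$.

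For $\widehat A$ I would run the moment method. Writing
\[
\mathbb{E}\,\Tr\!\left(\widehat A^{2k}\right)=\sum_{i_{0},\dots,i_{2k-1}} \mathbb{E}\!\left[\widehat A_{i_{0}i_{1}}\widehat A_{i_{1}i_{2}}\cdots\widehat A_{i_{2k-1}i_{0}}\right],
\]
a term survives only if every edge of the associated closed walk is traversed at least twice; the leading contribution, from double-tree walks, is $\leq \mathrm{Cat}(k)\,n^{k+1}\leq n\,(2\sqrt n)^{2k}$, and the Füredi--Komlós/Bai--Yin counting bounds the remaining walks (an edge used $r\geq 3$ times costs a factor $(2\delta_{n}\sqrt n)^{r-2}$ and loses vertices) by $n(2\sqrt n)^{2k}$ times a correction of the form $\exp(C\,P(k)\,\delta_{n})$ for a fixed polynomial $P$. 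Taking $k=k_{n}\asymp\log n$ and then $\delta_{n}\to 0$ so that $P(k_{n})\delta_{n}\to 0$, Markov's inequality gives
\[
\mathbb{P}\!\left(\|\widehat A\|_{op}\geq\left(2+\tfrac{\epsilon}{2}\right)\sqrt n\right)\leq \frac{\mathbb{E}\,\Tr(\widehat A^{2k_{n}})}{\left((2+\epsilon/2)\sqrt n\right)^{2k_{n}}}\leq n\,e^{o(k_{n})}\left(\frac{2}{2+\epsilon/2}\right)^{2k_{n}}\longrightarrow 0 .
\]

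For the heavy part, the naive estimate $\|R\|_{op}\leq\|R\|_{F}$ combined only with $\mathbb{E}A_{ik}^{4}\leq\lambda$ is off by a factor $\sqrt n$, since it gives merely $\mathbb{E}\|R\|_{F}^{2}=\sum_{i,k}\mathbb{E}[A_{ik}^{2}\mathbf 1_{|A_{ik}|>\delta_{n}\sqrt n}]\lsim \lambda n/\delta_{n}^{2}$. The point is to use the truncation: $\mathbb{E}[A_{ik}^{2}\mathbf 1_{|A_{ik}|>\delta_{n}\sqrt n}]\leq(\delta_{n}\sqrt n)^{-2}\tau_{n}$ with $\tau_{n}:=\sup_{i,k}\mathbb{E}[A_{ik}^{4}\mathbf 1_{|A_{ik}|>\delta_{n}\sqrt n}]$, so $\mathbb{E}\|R\|_{F}^{2}\leq n\tau_{n}/\delta_{n}^{2}$; since $\delta_{n}\sqrt n\to\infty$, uniform integrability of the fourth moments forces $\tau_{n}\to 0$, and one tunes $\delta_{n}\to 0$ slowly enough that $\tau_{n}=o(\delta_{n}^{2})$, whence $\|R\|_{F}=o(\sqrt n)$ with probability $\geq 1-\epsilon/2$ for $n$ large. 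Adding the three pieces gives $\|A\|_{op}\leq\|\widehat A\|_{op}+\|R\|_{op}+o(1)\leq(2+\epsilon)\sqrt n$ off an event of probability $\leq\epsilon$, for $n$ large.

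The main obstacle is the joint tuning of the truncation level $\delta_{n}$ against the power $2k_{n}$: $\delta_{n}$ must vanish fast enough to kill the $P(k_{n})$ correction in the trace bound, yet slowly enough that $\delta_{n}\sqrt n\to\infty$ and $\tau_{n}=o(\delta_{n}^{2})$. This last requirement is exactly the delicate part of Bai--Yin, and is also the reason one really wants slightly more than the bare bound $\mathbb{E}A_{ik}^{4}\leq\lambda$ in order for the threshold $n_{0}$ to depend only on $\lambda$ and $\epsilon$ — for instance a uniform $(4+\eta)$-th moment bound, or, in the case relevant to this paper, the polynomial tail decay supplied by $M$-regularity of the entries — which is what the cited results provide.
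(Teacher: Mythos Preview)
The paper does not prove this proposition at all: it is stated as following directly from Theorem~5.9 of \cite{bai-silverstein}, with \cite{Latala} offered as an alternative for a ``slightly weaker but sufficient estimate.'' Your sketch is precisely the classical Bai--Yin truncation-plus-moment argument that those references implement, so there is nothing to compare against.

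Your closing remark about the tuning of $\delta_{n}$ is well placed and worth amplifying. With only a uniform bound $\mathbb{E}A_{ik}^{4}\leq\lambda$ and no identical distribution assumption, one does \emph{not} automatically get $\tau_{n}:=\sup_{i,k}\mathbb{E}[A_{ik}^{4}\mathbf{1}_{|A_{ik}|>\delta_{n}\sqrt n}]\to 0$, since the family $\{A_{ik}^{4}\}$ need not be uniformly integrable; so the sharp constant $2$ in the proposition is not obviously attainable from the stated hypotheses alone. This is exactly why the paper also points to Latała's bound, which under a bare uniform fourth-moment assumption yields $\mathbb{E}\|A\|_{op}\lesssim_{\lambda}\sqrt n$ with a worse absolute constant --- and that is all the subsequent argument actually uses (the thresholds $10$ and $100$ in Claim~\ref{claim:either-or} and \eqref{eq:final-probability-estimate} can be replaced by any fixed constant). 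In the paper's own setting the entries are $M$-regular, hence have uniformly bounded densities, which does supply the needed uniform tail control; you identified this correctly.
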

Note that we can actually apply this proposition to 
estimate $\|f_{j+1}(A_{j+1})\|$. 
Indeed, if we let $\phi_{ik}:\mathbb{R}\to \mathbb{R}_{>0}$
be the density of the random variable $\left(A_{j+1}\right)_{ik}$, then 
one can check that
\begin{equation}\label{eq:f_j-in-coordinates}
\left(f_{j+1}(A_{j+1})\right)_{ik}
= 
\begin{cases}
\frac{1}{W}\frac{d}{dx}\left(\log \phi_{ik}\right)\left(\left(A_{j+1}\right)_{ik}\right) & \text{if } i=k,\\
\frac{1}{2W}\frac{d}{dx}\left(\log \phi_{ik}\right)\left(\left(A_{j+1}\right)_{ik}\right) & \text{if } i\neq k.
\end{cases} 
\end{equation}
Hence, $f_{j+1}(A_{j+1})$ is a symmetric matrix with independent entries on and above the diagonal. 
To estimate the moments of the entries we note the following elementary calculation. 
\begin{claim}
If $X$ is an $M$-regular random variable with density $\phi:\mathbb{R}\to\mathbb{R}_{>0}$
then we have $\mathbb{E}\frac{d}{dx}\left(\log \phi\right)(X)  = 0$, $\mathbb{E}\left(\frac{d}{dx}\left(\log \phi\right)(X)\right)^2 \leq M$, 
and $\mathbb{E}\left(\frac{d}{dx}\left(\log \phi\right)(X)\right)^4 \leq 3M^2$. 
\end{claim}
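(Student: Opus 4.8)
The plan is to recognize $\psi' := (\log\phi)'$ as the score function of $X$ and to extract all three moment bounds from the single integration-by-parts identity
\[
\mathbb{E}\big[\psi'(X)\,g(X)\big] = -\,\mathbb{E}\big[g'(X)\big],
\]
applied with $g=1$, $g=\psi'$, and $g=(\psi')^3$ in turn. Taking $g=1$ gives $\mathbb{E}\,\psi'(X)=0$. Taking $g=\psi'$ gives $\mathbb{E}[\psi'(X)^2] = -\mathbb{E}[\psi''(X)] \le \|(\log\phi)''\|_{L^\infty(\mathbb{R})} \le M$. Taking $g=(\psi')^3$, so that $g' = 3(\psi')^2\psi''$, gives $\mathbb{E}[\psi'(X)^4] = -3\,\mathbb{E}[\psi'(X)^2\,\psi''(X)] \le 3M\,\mathbb{E}[\psi'(X)^2] \le 3M^2$. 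So, modulo justifying the identity, the claim is immediate.

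To set up the identity I would first record that, since $\phi$ is $C^2$ and strictly positive, $\psi=\log\phi$ is $C^2$ with $\|\psi''\|_{L^\infty} \le M$, hence $\psi'$ is $M$-Lipschitz and satisfies the linear bound $|\psi'(x)| \le |\psi'(0)| + M|x|$. Together with the hypotheses $\mathbb{E}X^2\le 1$ and $\mathbb{E}X^4\le M$, this makes $\mathbb{E}|\psi'(X)|^k$ finite for $k=1,2,4$, so every expectation above is well-defined.

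The one genuinely technical point is justifying the integration by parts, i.e.\ that the boundary terms vanish. For each $g \in \{1,\psi',(\psi')^3\}$ I would verify two facts: that $\phi g \in L^1(\mathbb{R})$ (immediate from the polynomial growth of $g$ and the finiteness of $\mathbb{E}X^2$, $\mathbb{E}X^4$), and that $(\phi g)' = \phi\,(\psi' g + g') \in L^1(\mathbb{R})$ --- here $\phi\,\psi' g$ equals $\phi$ times $\psi'$, $(\psi')^2$, or $(\psi')^4$, all integrable as above, and $g'$ equals $0$, $\psi''$, or $3(\psi')^2\psi''$, each bounded by $M$ times a power of $\psi'$ integrable against $\phi$. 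Consequently $\phi g$ is absolutely continuous with finite limits at $\pm\infty$, and being itself integrable these limits are $0$; integrating $(\phi g)' = \phi' g + \phi g'$ over $\mathbb{R}$ and using $\phi' = \phi\,\psi'$ yields exactly $\mathbb{E}[\psi'(X)g(X)] + \mathbb{E}[g'(X)] = 0$. This tail control is the main obstacle, but it is routine given the moment assumptions; the rest is bookkeeping, and the fourth-moment bound $3M^2$ comes out with the stated constant directly from the chain above.
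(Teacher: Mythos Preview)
Your proof is correct and follows essentially the same approach as the paper: both argue by integration by parts (the paper writes out $\int (h')^k e^{-h}$ directly for $k=1,2,4$, while you package the same computation as a single score-function identity applied with $g=1,\psi',(\psi')^3$). Your justification of the vanishing boundary terms via the linear growth of $\psi'$ and the moment hypotheses is in fact more careful than the paper's, which simply asserts that the boundary terms vanish ``since $X$ is $M$-regular.''
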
 
\begin{proof}
    Writing $h(x) = (\log \phi)(x)$, the fundamental theorem of calculus implies
    $$\mathbb{E}\frac{d}{dx}\left(\log \phi \right)(X) = \int_{\mathbb{R}} h'(x)e^{-h(x)}dx =0.$$
    Furthermore, using integration by parts we can estimate
    \begin{align*}
    \mathbb{E}\left(\frac{d}{dx}\left(\log \phi\right)(X)\right)^2
    & = \int_{\mathbb{R}} h'(x)^2e^{-h(x)}dx = \int_{\mathbb{R}}h{''}(x)e^{-h(x)}dx\leq M,
    \end{align*}
    and 
    \begin{align*}
    \mathbb{E}\left(\frac{d}{dx}\left(\log \phi\right)(X)\right)^4 
     = \int_{\mathbb{R}} h'(x)^4e^{-h(x)}dx
     = 3\int_{\mathbb{R}} h'(x)^2h''(x)e^{-h(x)}dx\leq 3M^2.
    \end{align*}
    Note that the boundary terms vanished since $X$ is $M$-regular.
\end{proof}
By scaling, this implies each entry of $f_{j+1}(A_{j+1})$ is mean $0$, 
has second moment at most $MW^{-1}$ and fourth moment at most $3M^2W^{-2}$. 
Hence, 
for $W$ sufficiently large, depending on $M$, Proposition \ref{prop:bai-yin} 
and a union bound imply
\begin{equation}\label{eq:final-probability-estimate}
\mathbb{P}\left(\sup_{i=j,j-1}\left\|A_i\right\|, \left\|B_i\right\|\leq 10, \sup_{i=j,j+1}\left\|f_{i}(A_{i})\right\| \leq 10(1+M)\right)\geq 1/2,
\end{equation}
for $j\in [2,N-1]$. 

In the above event, Claim \ref{claim:either-or} implies either $X_j\leq 100(1+M + \left|E\right|)$ or $X_{j-1}\leq 100(1+M + \left|E\right|)$, 
and so taking $t_0 =100(1+M + \left|E\right|)$, using the independence of the $A_i$ and $B_i$ and assuming 
$N>3$ (if $N\leq 3$ the theorem is trivial) gives
\begin{align*}
\mathbb{P}\left(\sum_{i = 1}^{N-1} 1_{X_i\leq t_0} \geq \frac{N}{10} \right)
& \geq 1-2e^{-c N}.
\end{align*}
The law of total probability and \eqref{eq:lower-bound-sum-of-ps} 
then imply the $\mathcal{F}$-measurable event
$$\sum_{i=1}^{N-1} p_i = \mathbb{E}\left[\sum_{i=1}^{N-1}1_{X_i\leq t_0}\, \middle| \, \mathcal{F}\right]\gsim N\mathbb{P}\left(\sum_{i=1}^{N-1}1_{X_i\leq t_0}\geq \frac{N}{10}\, \middle|\, \mathcal{F}\right) \gsim N,$$
holds with probability at least $1-2e^{-c N}$, which combined with \eqref{eq:almost-done-version-2} 
proves the theorem for $N>0$ and $W$ sufficiently large, depending on $M$.
For bounded $W>0$, the same argument works with the constants in \eqref{eq:final-probability-estimate}
and Claim \ref{claim:either-or} adjusted accordingly.  

\section{Appendix}

\subsection{Proof of Lemma \ref{eq:elementary-jensens-assum}}
Without loss of generality we can assume $\mathbb{E}e^{X} = 1$. 
Since $\delta\in [0,1]$, \eqref{eq:elementary-jensens-assum} 
implies $\mathbb{P}(|e^{X} - 1|\geq c\delta)\geq p$ so we have
\begin{align*}
    \mathbb{E}e^{2X}  
    & = 1 + \mathbb{E}\left(e^{X} - 1\right)^2\\
    & \geq 1 + cp\delta^2\\
    & \geq e^{cp\delta^2}.
\end{align*}
using that $p,\delta\in [0,1]$ to pass to the third line. The claim follows.

\subsection{Proof of Wegner Estimate}

We follow the arguments of
Lemma 1 in \cite{Bourgain-2013} and Proposition 3.4 of \cite{Schlag}.  
First we prove the following claim. It says 
a random diagonal matrix plus deterministic one has random eigenvalues
in a quantitative way.

\begin{claim}
    \label{claim:general-wegner-claim}
    Let $M>0$. If $D\in \mathbb{R}^{n\times n}$ is a random diagonal matrix
    with independent $M$-regular entries on the diagonal, 
     $Q\in \mathbb{R}^{n\times n}$ is any symmetric matrix, and $\epsilon>0$ then 
    $$\mathbb{P}\left(\left|\sigma(D+Q)\cap\left[-\epsilon,\epsilon\right]\right| \geq 1 \right) \lsim \epsilon n.$$
    \end{claim}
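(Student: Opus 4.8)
The plan is to reduce the statement to a one-dimensional anti-concentration fact about $M$-regular variables, using first-order eigenvalue perturbation theory. First I would fix a realization of all diagonal entries of $D$ except one, say $D_{kk}$, and view the smallest-in-absolute-value eigenvalue of $D+Q$ as a function of the single scalar $D_{kk}$. By the min-max principle the eigenvalues of $D+Q$ are $1$-Lipschitz in $D_{kk}$, but more importantly, if $\lambda(s)$ denotes a simple eigenvalue of $D+Q$ with $D_{kk}$ replaced by $s$, then $\frac{d\lambda}{ds} = |u_k|^2$ where $u$ is the corresponding unit eigenvector; since $\sum_k |u_k|^2 = 1$, at least one coordinate $k$ has $|u_k|^2 \ge 1/n$. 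The idea is that along that coordinate the eigenvalue moves at speed $\gtrsim 1/n$, so the set of $s$ for which $\lambda(s) \in [-\epsilon,\epsilon]$ has length $\lesssim \epsilon n$, and an $M$-regular density is bounded (uniformly, by the log-concavity-type bound \eqref{eq:general-distribution-condition} together with $\mathbb{E}X^2\le 1$), giving probability $\lesssim \epsilon n$ for that coordinate. A union bound over $k\in[1,n]$ would then cost a factor $n$, which is too lossy; so instead I would argue more carefully, choosing the coordinate $k=k(D)$ adaptively as the one maximizing $|u_k|^2$ for the relevant eigenvector, and integrate out $D_{kk}$ first conditionally.

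The cleaner route, which I would actually carry out, is: condition on $(D_{ii})_{i\ne k}$ for each fixed $k$, and bound
\[
\mathbb{P}\big(\exists\,\lambda\in\sigma(D+Q)\cap[-\epsilon,\epsilon]\text{ with eigenvector }u,\ |u_k|^2\ge 1/n\big)\lesssim \epsilon n
\]
using that, with the other diagonal entries frozen, the map $s\mapsto$ (the branch of eigenvalue that is near $0$) is monotone with derivative $|u_k(s)|^2$, so on the region where $|u_k|^2\ge 1/n$ the eigenvalue sweeps through $[-\epsilon,\epsilon]$ over an $s$-interval of length $\le 2\epsilon/(1/n) = 2\epsilon n$; since the conditional density of $D_{kk}$ is an $M$-regular density, hence uniformly bounded by some $C(M)$, this event has conditional probability $\le C(M)\,\epsilon n$. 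Because for every eigenvector $u$ there is at least one $k$ with $|u_k|^2\ge 1/n$, the event $\{|\sigma(D+Q)\cap[-\epsilon,\epsilon]|\ge 1\}$ is contained in the union over $k$ of these events — but to avoid the extra factor $n$ I would instead note it suffices to bound the \emph{expected number} of eigenvalues in $[-\epsilon,\epsilon]$, or equivalently run the perturbation argument with the resolvent $\Tr\,\mathrm{Im}(D+Q-E-i\eta)^{-1}$ and integrate in $E$, which is the standard spectral-averaging (Wegner) computation; spectral averaging gives the bound with the correct power of $n$ because the derivative identity $\partial_{D_{kk}}(D+Q-z)^{-1}_{kk} = -[(D+Q-z)^{-1}_{kk}]^2$ lets one integrate the diagonal Green's function entry against the $M$-regular density and use $\int |{(x-z)^{-1}}|\,\phi(x)\,dx \lesssim \|\phi\|_\infty$.

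The main obstacle is getting the power of $n$ right: a naive union bound over the coordinate $k$ loses a factor $n$, so the argument must either (i) use spectral averaging on the trace of the imaginary part of the resolvent, turning the count of eigenvalues into an integral that is handled coordinate-by-coordinate without a union bound, or (ii) exploit that the adaptive choice $k=\arg\max_k |u_k|^2$ is determined by the frozen data so no union is needed. I expect to take route (i), since it is the classical Wegner/spectral-averaging estimate and meshes with the $M$-regularity hypothesis exactly through the bound $\|\phi\|_{L^\infty}\lesssim_M 1$ (which follows from $\mathbb{E}X^2\le 1$ and \eqref{eq:general-distribution-condition}) and the elementary inequality $\int_{\mathbb{R}}\frac{\eta}{(x-E)^2+\eta^2}\,\phi(x)\,dx\le \pi\|\phi\|_\infty$. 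Everything else — the Schur complement to isolate the $k$-th diagonal Green's function entry, the derivative formula, and the final integration in $E$ over $[-\epsilon,\epsilon]$ picking up the factor $\epsilon$ — is routine.
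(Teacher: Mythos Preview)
Your route (i) --- bounding the expected eigenvalue count by $\epsilon\,\mathrm{Im}\,\Tr(D+Q-i\epsilon)^{-1}$, then using the Schur complement to write each diagonal resolvent entry as $(D_{kk}-z_0)^{-1}$ with $\mathrm{Im}\,z_0>0$ and integrating against the bounded $M$-regular density via the Poisson-kernel bound $\int\frac{\eta}{(x-E)^2+\eta^2}\phi(x)\,dx\le\pi\|\phi\|_\infty$ --- is exactly the paper's proof. The only cosmetic difference is that the paper sets $\eta=\epsilon$ and uses $1_{|\lambda|\le\epsilon}\lsim \epsilon^2/(\lambda^2+\epsilon^2)$ rather than integrating in $E$; the preliminary perturbation-theory discussion you give is unnecessary but correctly diagnoses why the naive union bound fails.
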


We will use the following version of the Schur complement formula. 
\begin{proposition}(Schur Complement Formula)\label{prop:schur-complement}
If 
$$A = 
\begin{pmatrix}
A_{11} & A_{12}\\
A_{21} & A_{22}
\end{pmatrix}$$
is a real or complex block matrix and $A$ and $A_{22}$ are invertible, then
$$\left(A^{-1}\right)_{11} = \left(A_{11}-A_{12}A_{22}^{-1}A_{21}\right)^{-1}.$$
\end{proposition}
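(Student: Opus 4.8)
The plan is to prove the identity directly from the definition of the inverse, reducing it to the solution of a block linear system and using only the hypotheses that $A$ and $A_{22}$ are invertible; no determinant or adjugate computation is needed. Write $A^{-1}$ in block form as $\begin{pmatrix} M_{11} & M_{12}\\ M_{21} & M_{22}\end{pmatrix}$, conformally with $A$, and set $S := A_{11} - A_{12}A_{22}^{-1}A_{21}$, the \emph{Schur complement} of $A_{22}$ in $A$. The goal is to show $S M_{11} = I$, from which $(A^{-1})_{11} = M_{11} = S^{-1}$ follows.

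First I would fix an arbitrary vector $b$ of the dimension of $A_{11}$ and consider
$$\begin{pmatrix} x\\ y\end{pmatrix} := A^{-1}\begin{pmatrix} b\\ 0\end{pmatrix},$$
which is well defined since $A$ is invertible; by block matrix multiplication its top block is $x = M_{11}b$. Applying $A$ to both sides gives the two equations $A_{11}x + A_{12}y = b$ and $A_{21}x + A_{22}y = 0$. Second, since $A_{22}$ is invertible the lower equation yields $y = -A_{22}^{-1}A_{21}x$, and substituting into the upper equation gives $\left(A_{11} - A_{12}A_{22}^{-1}A_{21}\right)x = b$, i.e. $Sx = b$. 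Combining with $x = M_{11}b$ we obtain $S M_{11} b = b$ for every $b$, hence $S M_{11} = I$. Third, since $S$ and $M_{11}$ are square matrices and $S$ has a right inverse, $S$ is invertible with $S^{-1} = M_{11}$, which is exactly the claim; the argument is identical over $\mathbb{R}$ and over $\mathbb{C}$.

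There is no genuine obstacle here — this is a standard computation — and the only points meriting a sentence of care are that the invertibility of $A$ is what guarantees that a solution $(x,y)$ exists for each $b$, and that the elementary fact "a square matrix with a one-sided inverse is invertible" is what upgrades $S M_{11} = I$ to $M_{11} = S^{-1}$. As an alternative one could instead verify by direct block multiplication the factorization
$$A = \begin{pmatrix} I & A_{12}A_{22}^{-1}\\ 0 & I\end{pmatrix}\begin{pmatrix} S & 0\\ 0 & A_{22}\end{pmatrix}\begin{pmatrix} I & 0\\ A_{22}^{-1}A_{21} & I\end{pmatrix},$$
conclude from the invertibility of $A$ and $A_{22}$ that the middle factor, hence $S$, is invertible, invert the product (the outer factors being unit block-triangular with explicit inverses), and read off the $(1,1)$ block; but the linear-system argument above is shorter and makes the invertibility of $S$ transparent.
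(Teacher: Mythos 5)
Your proposal is correct and follows essentially the same route as the paper: both write out the block equations coming from $AA^{-1}=I$ (you phrase them as a linear system $A\binom{x}{y}=\binom{b}{0}$, the paper writes them directly as matrix identities), solve the second equation using the invertibility of $A_{22}$, and substitute into the first to get $S\,(A^{-1})_{11}=I$. Your extra sentence observing that a square matrix with a one-sided inverse is invertible is a small point of care the paper leaves implicit, but it is the same argument.
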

\begin{proof}[Proof of Proposition]
Viewing $A^{-1}$ in the same block form as $A$ gives the equations
\begin{align*}
Id & = A_{11}\left(A^{-1}\right)_{11} + A_{12}\left(A^{-1}\right)_{21}\\
0 & = A_{21}\left(A^{-1}\right)_{11} + A_{22}\left(A^{-1}\right)_{21}.
\end{align*}
Solving the second equation for $\left(A^{-1}\right)_{21}$ 
and substituting into the first gives the result. 
\end{proof}
Now we prove the claim. 
    \begin{proof}[Proof of Claim]
    The idea is to move the spectrum of $D+Q$ by varying the entries of $D$.  
    If $\lambda_1,...,\lambda_n$ are the eigenvalues of $D+Q$, then 
    \begin{align*}
    \mathbb{E}\left|\sigma(D+Q)\cap[-\epsilon,\epsilon]\right| 
    & \lsim \epsilon \mathbb{E}\sum_{i=1}^{n} \frac{\epsilon}{\lambda_i^2 + \epsilon^2}\\
    & \lsim \epsilon \text{Im} \Tr\left((D+Q-i\epsilon)^{-1}\right)\\
    & = \epsilon\mathbb{E} \sum_{x\in [1,n]}\text{Im}\left(D+Q-i\epsilon\right)^{-1}\left(x,x\right)
    \end{align*}
    To estimate each term in the sum, we apply the Schur complement formula. 
    Indeed, for any $x\in [1,n]$ the Schur complement formula 
    applied to $D+Q - i\epsilon$ with $A_{11} = \left(D+Q-i\epsilon\right)_{xx}$ gives
    $$\left(D+Q-i\epsilon\right)^{-1}\left(x,x\right) = \left(D_{xx} - z_0\right)^{-1},$$
    where $z_0\in \mathbb{C}$ is a complex number with $\text{Im}z_0>0$, 
    depending on all entries of $D$ except $D_{xx}$. Hence if $\phi$ 
    is the density of $D_{xx}$ we have 
    \begin{align*}
    \mathbb{E}\text{Im}\left(D+Q-z_0\right)^{-1}\left(x,x\right) \lsim\sup_{z\in \mathbb{C}, \text{Im}z>0}\int_{\mathbb{R}}\frac{\text{Im}\ z}{\left(\text{Im}\ z\right)^2 + \left(\text{Re}\ z-x\right)^2}\phi(x) dx\leq \left\|\phi\right\|_{L^{\infty}}\lsim 1,
    \end{align*}
    since $X$ being $M$-regular implies $\phi$ is bounded (see \eqref{eq:bounded-second-derivative} for instance).
    Hence 
    $$\mathbb{P}\left(\left|\sigma(D+Q)\cap\left[-\epsilon,\epsilon\right]\right|\geq 1\right)
    \leq \mathbb{E}\left|\sigma(D+Q)\cap[-\epsilon,\epsilon]\right| \lsim \epsilon n.$$
    \end{proof}

Now the Wegner estimate follows easily.  
Applying the Schur complement formula with 
$A = H-E$ and $A_{11}$ 
being the $2W\times 2W$ matrix given by 
$A_{11} = H_{\{i,j\}}- E$, 
gives
$$
\begin{pmatrix}
G(i,i) & G(i,j)\\
G(j,i) & G(j,j)
\end{pmatrix}
= 
\left(\begin{pmatrix}
    H_{ii} & 0\\
    0 & H_{jj}
    \end{pmatrix}
    + Q\right)^{-1}
    := \left(\tilde{D} + Q\right)^{-1}.
$$
where $Q$ is a $2W\times 2W$ symmetric matrix
which is \textit{independent of $\left(H_{ii}, H_{jj}\right)$}.
Note that Claim \ref{claim:general-wegner-claim} implied the 
necessary matrices were invertible almost surely.  
Finally, the entries of $\sqrt{W}H_{ii}$ and $\sqrt{W}H_{jj}$ are 
$M$-regular, so Claim \ref{claim:general-wegner-claim} implies 
\begin{align*}
\mathbb{P}\left(\left\|G(i,j)\right\|>\lambda\right)
& \leq \mathbb{P}\left(\left|\sigma\left(\tilde{D} + Q\right)\cap\left[-\lambda^{-1},\lambda^{-1}\right]\right|\geq 1\right)\\
& = \mathbb{P}\left(\left|\sigma\left(\sqrt{W}\tilde{D} + \sqrt{W}Q\right)\cap\left[-\sqrt{W}\lambda^{-1},\sqrt{W}\lambda^{-1}\right]\right|\geq 1\right)\\
& \lsim W^{3/2}\lambda^{-1},
\end{align*}
which proves the claim.

\printbibliography

\end{document}